\newtheorem{theorem}{Theorem}[section]
\newtheorem{lemma}[theorem]{Lemma}
\newtheorem{proposition}{Proposition}[section]
\newtheorem{example}{Example}[section]
\begin{document}

\title[Local fractional metric dimension of symmetric planar graphs]{Local fractional metric dimension of rotationally symmetric planar graphs arisen from planar chorded cycles}

\author{Shahbaz Ali}
\address{Department of Mathematics, University of the Punjab, Lahore, Pakistan}
\email{shahbaz.math@gmail.com}

\author{Ra\'ul M. Falc\'on}
\address{Department of Applied Mathematics I, Universidad de Sevilla, Spain}
\email{rafalgan@us.es}

\author{Muhammad Khalid Mahmood}
\address{Department of Mathematics, Khwaja Fareed University of Engineering \& Information Technology,\\ Rahim Yar Khan, Pakistan.}
\email{khalid.math@pu.edu.pk}

\subjclass[2010]{05C72; 05C12; 05C10.}

\keywords{Local fractional metric dimension, rotationally symmetric planar graph, planar chorded cycle, local resolving neighbourhood.}

\begin{abstract}
In this paper, a new family of rotationally symmetric planar graphs is described based on an edge coalescence of planar chorded cycles. Their local fractional metric dimension is established for those ones arisen from chorded cycles of order up to six. Their asymptotic behaviour enables us to ensure the existence of new families of rotationally symmetric planar graphs with either constant or bounded local fractional dimension.
\end{abstract}

\maketitle

\section{Introduction}

In the 1970's,  Slater \cite{Slater1975} and Harary and Melter \cite{Harary1976} introduced independently the {\em metric dimension problem}, which consists of determining the minimum number of vertices within a graph that may uniquely be represented by their respective vector of distances. Being NP-hard \cite{Garey1979}, this problem has explicitly been solved for different types of graphs \cite{Caceres2005,Chartrand2000,Imran2013}. General and specific bounds are known depending on the order, maximum degree or diameter of the graph under consideration \cite{Chartrand2000,Feng2012,Slater1988}. In this regard, Imran et al. \cite{Imran2012,Imran2016} asked for characterizing families of (rotationally symmetric) planar graphs with constant metric dimension (see also \cite{Chartrand2000,Javaid2008,Khuller1996}). The metric dimension problem plays a relevant role not only in the study of structural properties of graphs, but also in solving real life problems such as robot navigation \cite{Khuller1996}, pattern recognition and image processing \cite{Melter1984}, representation of chemical compounds \cite{Chartrand2000}, combinatorial optimization \cite{Sebo2004} or networking \cite{Beerliova2005}, amongst others. Particularly, the metric dimension problem concerning hexagonal graphs \cite{Manuel2008,Shreedhar2010,Xu2014} has acquired special relevance because of their implementation in computer graphics \cite{Lester1984}, multiprocessor networks \cite{Chen1990} and cellular networks \cite{Nocetti2002}.

\vspace{0.2cm}

In 2000, Chartrand et al. \cite{Chartrand2000} formulated the metric dimension problem as an integer programming problem. Shortly after, Currie and Oellermann \cite{Currie2001} formulated a linear programming relaxation whose optimal solution they termed {\em fractional metric dimension} of the graph (see also \cite{Arumugan2012,Fehr2006}). This new problem has explicitly been solved for different types of graphs \cite{Arumugan2013,Feng2018,Feng2014,Feng2013,Saputro2018}.

\vspace{0.2cm}

In 2018, a local version of the fractional metric dimension concerning only adjacent vertices was introduced by Benish et al. \cite{Benish2018} (see also \cite{Benish2019}). Even if its study is still in a very initial stage, the local fractional metric dimension has explicitly been determined for several types of graphs \cite{Aisyah2019,Javaid2020}. More recently, Liu et al. \cite{Liu2020} have computed the local fractional metric dimension of a family of rotationally symmetric planar graphs derived from an edge coalescence of a cycle of order $m$ with $m$ distinct chorded cycles of a same order $n\in\{3,4,5\}$. This paper delves into this last topic by describing a new family of rotationally symmetric planar graphs arisen from an edge coalescence of $m$ planar chorded cycles of order $n$.

\vspace{0.2cm}

The paper is organized as follows. In Section~\ref{sec:preliminaries}, we remind some preliminary concepts and results on graph theory that are used throughout the paper. In Section~\ref{sec:family}, we describe the mentioned  family of rotationally symmetric planar graphs. We focus in particular on those ones derived from planar chorded cycles of order $n\in\{4,5,6\}$, for which we study their local fractional metric dimension. The case $n=4$ is dealt with at the end of Section~\ref{sec:family}, whereas exhaustive analyses of the cases $n=5$ and $n=6$ are done in Section \ref{sec:F1}. The obtained results are summarized in the conclusion section, where we make use of the asymptotic behaviour in order to ensure which ones of these rotationally symmetric planar graphs have either a constant or a bounded local fractional metric dimension. Finally, all the tables described in the body of the manuscript are enumerated in Appendix \ref{app}.

\section{Preliminaries}\label{sec:preliminaries}

Let us review some basic concepts and results on graph theory that are used throughout the paper. See \cite{West1996} for more details about this topic.

\vspace{0.1cm}

Any {\em graph} $G$ is formed by a set $V(G)$ of {\em vertices} and a set $E(G)$ of {\em edges} so that each edge contains two vertices, which are said to be {\em adjacent}. The subset of vertices that are adjacent to a given vertex $v\in V(G)$ constitutes its {\em neighborhood} $N(v)$. Two adjacent vertices having the same neighborhood are called {\em true twin vertices}. From here on, we denote $uv$ the edge formed by two vertices $u,v\in V(G)$. The number of vertices and the number of edges of the graph $G$ constitute, respectively, its {\em order} and {\em size}. If both of them are finite, then the graph is said to be {\em finite}. The graph $G$ is called {\em bipartite} if the set $V(G)$ may be partitioned into two subsets so that every edge contains exactly one vertex of each subset. Further, two graphs are {\em isomorphic} if there exists a bijection between their sets of vertices preserving their adjacency.

A {\em path} between two distinct vertices $v,w\in V(G)$ is any ordered sequence of adjacent and pairwise distinct vertices $\langle\,v_0=v,v_1,\ldots,v_{n-2},v_{n-1}=w\,\rangle$ in $V(G)$, with $n>2$. A graph is {\em connected} if there always exists a path between any pair of vertices. If the initial and final vertices of a path coincide, then it is called a {\em cycle}. If all the vertices of a cycle are joined to a new vertex, then the resulting graph is called a {\em wheel}. The new vertex is called the {\em center} of the wheel. Further, a {\em chord} of an existing cycle in $G$ is any edge of the graph containing two non-adjacent vertices of the cycle. A {\em chorded cycle} is any cycle containing at least one chord.

A {\em planar graph} is any graph that can be embedded in the plane. That is, it may be drawn on the plane without crossing edges. Figure \ref{Figure_PCC} illustrates the set of non-isomorphic planar chorded cycles of order $n\leq 6$.

\begin{figure}[ht]
\centering
\includegraphics[scale=0.073]{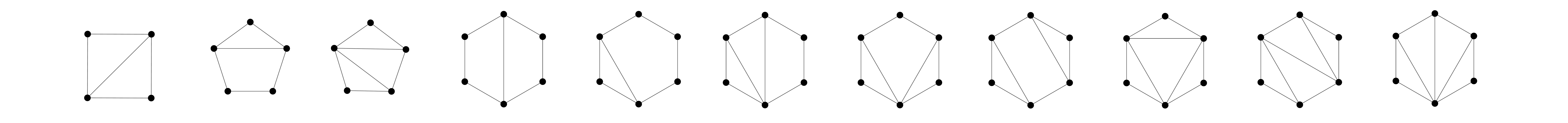}
\caption{Non-isomorphic planar chorded cycles of order $n\leq 6$.}
\label{Figure_PCC}
\end{figure}

The union of two graphs $G_1$ and $G_2$ is the graph $G_1\cup G_2$ such that $V(G_1\cup G_2)=V(G_1)\cup V(G_2)$ and $E(G_1\cup G_2)=E(G_1)\cup E(G_2)$. If $E(G_1)\neq\emptyset\neq E(G_2)$, then the {\em edge coalescence} \cite{Globler2013} of $G_1$ and $G_2$ via $u_1v_1\in E(G_1)$ and $u_2v_2\in E(G_2)$ is the graph $G_1\cdot G_2(u_1v_1,u_2v_2:u_3v_3)$ resulting after identifying in $G_1\cup G_2$ both edges $u_1v_1$ and $u_2v_2$, which merge into a single new edge $u_3v_3$. Figure \ref{Figure_Coalescence} illustrates this concept with three different examples of edge coalescence between the same pair of planar chorded cycles of order six.

\begin{figure}[ht]
\begin{center}
\includegraphics[scale=0.7]{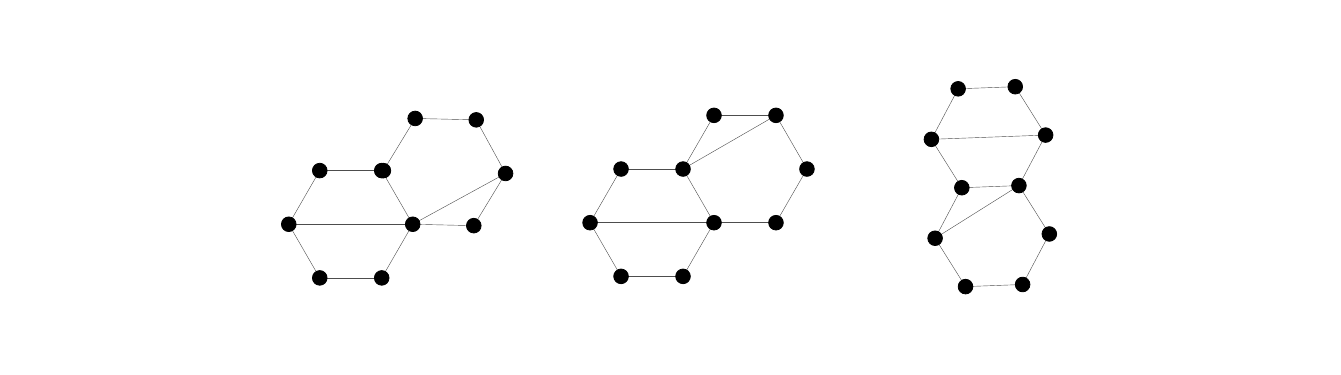}
\caption{Edge coalescences between the same pair of planar chorded cycles of order six.}
\label{Figure_Coalescence}
\end{center}
\end{figure}

Let $G=(V(G),E(G))$ be a finite connected graph. The minimum number of edges within a path between two vertices  $v,w\in V(G)$ constitutes the {\em distance} $d(u,v)$. The {\em diameter} $\mathrm{diam}(G)$ of the graph $G$ is the maximum distance between any two vertices in $V(G)$. Further, the {\em representation} of a vertex $v\in V(G)$ with respect to an ordered subset $S=\{v_1, \ldots, v_k\}\subseteq V(G)$  is the ordered $k$-tuple $r(v|S):=(d(v, v_1), \ldots, d(v, v_k))$. The subset $S$ is a {\em resolving set} for the graph $G$ if $r(v|S)\neq r(w|S)$, for every pair of distinct vertices $v,w\in V(G)$. If such condition holds for every edge $vw\in E(G)$, then the subset $S$ is a {\em local resolving set}. The {\em (local) metric dimension} of the graph $G$ is then defined as the minimum number of vertices contained in any of its (local) resolving sets. It is denoted $\dim(G)$ ($\mathrm{ldim}(G)$ in its local version).

The {\em resolving neighbourhood} of a pair of vertices $v,w\in V(G)$ is the set
\[\mathcal{R}\{v,w\}:=\{u \in V(G) | ~d(v, u) \neq d(w, u)\}.\] From here on, in order to simplify the notation of this manuscript, we also denote  $\overline{\mathcal{R}}\{v,w\}=V(G)\setminus\mathcal{R}\{v,w\}$. A {\em resolving function} \cite{Arumugan2012} of the graph $G$ is any map $\vartheta: V(G) \rightarrow [0, 1]$ such that

\begin{equation}\label{eq_RF} \sum_{u \in \mathcal{R}\{v, w\} } \vartheta(u)\geq 1,
\end{equation}
for every pair of distinct vertices $v,w\in V(G)$. The {\em fractional metric dimension} of the graph $G$ is
\[\dim_{\mathrm{f}}(G):= \min\left\{ \sum_{v \in V(G)} \vartheta(v) \colon\, \vartheta \textrm{ is a resolving function of } G \right\}.\]
The concepts of {\em local resolving neighbourhood} and {\em local resolving function} arise similarly in case of dealing only with pairs of adjacent vertices. Then, the {\em local fractional metric dimension} of the graph $G$ is defined as
\[\mathrm{ldim}_{\mathrm{f}}(G):= \min\left\{ \sum_{v \in V(G)} \vartheta(v) \colon\, \vartheta \textrm{ is a local resolving function of } G \right\}.\]
Further, we denote from now on $\ell(G):=\min\{\left|\mathcal{R}\left\{v,w\right\}\right|\colon\, vw\in E(G)\}$. In particular, since $v,w\in\mathcal{R}\{v,w\}$, for all $v,w\in V(G)$, it is $\ell(G)\geq 2$. The next result follows from all the previous definitions.

\begin{lemma}[\cite{Okamoto2010,Benish2018,Benish2019}]\label{LemmaBenish} Let $G$ be a finite connected graph of order $n\geq 2$. Then,
\begin{equation}\label{eq_Benish1}
\mathrm{ldim}_{\mathrm{f}}(G)\leq\dim_{\mathrm{f}}(G),
\end{equation}
\begin{equation}\label{eq_Benish2}
\frac n{n-\mathrm{ldim}(G)+1}\leq \mathrm{ldim}_{\mathrm{f}}(G)\leq\frac n {\ell(G)}\leq \frac n2
\end{equation}
\begin{equation}\label{eq_Benish3}
1\leq \mathrm{ldim}_{\mathrm{f}}(G)\leq\mathrm{ldim}(G)\leq n-\mathrm{diam}(G).
\end{equation}
In addition, the following assertions are satisfied.
\begin{enumerate}
    \item $\mathrm{ldim}_{\mathrm{f}}(G)=1$ if and only if the graph $G$ is bipartite.
    \vspace{0.1cm}
    \item $\mathrm{ldim}_{\mathrm{f}}(G)=\frac n2$ if and only if each vertex in $V(G)$ has a true twin vertex.
\end{enumerate}
\end{lemma}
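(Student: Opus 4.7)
The plan is to verify each displayed inequality and each of the two characterizations separately, leaning on the fact that for any (local) resolving set $S \subseteq V(G)$, the indicator $\chi_S$ is a $\{0,1\}$-valued (local) resolving function of weight $|S|$. Inequality~(\ref{eq_Benish1}) is immediate, since the feasible region of the LP defining $\dim_{\mathrm{f}}(G)$ is contained in that defining $\mathrm{ldim}_{\mathrm{f}}(G)$. In~(\ref{eq_Benish3}): the left bound $1 \leq \mathrm{ldim}_{\mathrm{f}}(G)$ follows by applying~(\ref{eq_RF}) to any single edge $vw$ and using the nonnegativity of $\vartheta$; the middle bound $\mathrm{ldim}_{\mathrm{f}}(G) \leq \mathrm{ldim}(G)$ is witnessed by $\chi_S$ with $S$ a minimum local resolving set; and the right bound $\mathrm{ldim}(G) \leq n - \mathrm{diam}(G)$ reduces to the classical diameter bound $\dim(G) \leq n - \mathrm{diam}(G)$ together with $\mathrm{ldim}(G) \leq \dim(G)$. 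Turning to~(\ref{eq_Benish2}), the upper bound $\mathrm{ldim}_{\mathrm{f}}(G) \leq n/\ell(G)$ follows from feasibility of the constant function $\vartheta \equiv 1/\ell(G)$, while $n/\ell(G) \leq n/2$ uses $\ell(G) \geq 2$, itself a consequence of $v,w \in \mathcal{R}\{v,w\}$ for every edge.

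The delicate step in~(\ref{eq_Benish2}) is the left-hand inequality $n/(n-\mathrm{ldim}(G)+1) \leq \mathrm{ldim}_{\mathrm{f}}(G)$. Given an optimal local resolving function $\vartheta$, we may first assume $0 \leq \vartheta(u) \leq 1$ for every $u$ (capping preserves feasibility and does not increase the total weight). Then the support $T = \{u \colon \vartheta(u) > 0\}$ is a local resolving set, so $|T| \geq \mathrm{ldim}(G)$, and hence at most $n - \mathrm{ldim}(G)$ vertices carry weight zero. Combining this structural constraint with the active form of~(\ref{eq_RF}) at an extremal edge, via an LP-duality bookkeeping, produces the desired inequality. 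I expect this quantitative step to be the main obstacle of the proof.

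It remains to treat the two characterizations. For item~(1): if $G$ is bipartite, a parity argument shows that $d(u,v)$ and $d(u,w)$ have different parities for every edge $vw$ and every $u \in V(G)$, so $\mathcal{R}\{v,w\} = V(G)$ and $\vartheta \equiv 1/n$ is a local resolving function of weight~$1$; conversely, if $\mathrm{ldim}_{\mathrm{f}}(G) = 1$ then~(\ref{eq_RF}) tightens against $\sum_u \vartheta(u) = 1$, forcing the support of any optimal $\vartheta$ to lie inside $\mathcal{R}\{v,w\}$ for every edge $vw$; picking any $u$ in the support, the map $x \mapsto d(u,x)$ then attains different values at the endpoints of every edge and, combined with the triangle inequality, yields a proper $2$-coloring by parity, proving $G$ bipartite. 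For item~(2): two adjacent vertices $v, v'$ are true twins exactly when $\mathcal{R}\{v,v'\} = \{v,v'\}$, as can be checked from the definitions; so if every vertex has a true twin, the twin-edges partition $V(G)$ into $n/2$ pairs and summing~(\ref{eq_RF}) over these edges gives $\sum_u \vartheta(u) \geq n/2$, which matches the upper bound from~(\ref{eq_Benish2}); conversely, $\mathrm{ldim}_{\mathrm{f}}(G) = n/2$ together with~(\ref{eq_Benish2}) forces $\ell(G) = 2$ and hence the existence of a first twin-pair, after which an induction on the graph obtained by deleting that pair completes the argument.
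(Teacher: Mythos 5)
The paper never actually proves this lemma --- it is imported wholesale from the cited references with the remark that it ``follows from all the previous definitions'' --- so there is no in-paper argument to compare against, and your attempt has to stand on its own. Most of it does: your proofs of (\ref{eq_Benish1}), of (\ref{eq_Benish3}), of the two upper bounds in (\ref{eq_Benish2}), and of item (1) are complete and correct. The first genuine gap is precisely the step you flag and defer to ``LP-duality bookkeeping'': the left inequality of (\ref{eq_Benish2}). The observation that the support of $\vartheta$ is a local resolving set of size at least $\mathrm{ldim}(G)$ does not by itself produce the bound. The missing idea is to order the vertices by decreasing weight, $\vartheta(u_1)\geq\cdots\geq\vartheta(u_n)$, and note that with $k=\mathrm{ldim}(G)$ the set $\{u_1,\ldots,u_{k-1}\}$ is too small to be a local resolving set, so some edge $vw$ has $\mathcal{R}\{v,w\}\subseteq\{u_k,\ldots,u_n\}$; the constraint for that edge forces $(n-k+1)\,\vartheta(u_k)\geq 1$, hence $\vartheta(u_i)\geq \frac{1}{n-k+1}$ for every $i\leq k-1$, and adding these $k-1$ terms to $\sum_{i\geq k}\vartheta(u_i)\geq 1$ gives $\sum_u\vartheta(u)\geq \frac{k-1}{n-k+1}+1=\frac{n}{n-k+1}$. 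Without some such argument the inequality is simply unproved.

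Item (2) also has two real problems. In the forward direction, the twin-edges need not ``partition $V(G)$ into $n/2$ pairs'': in $K_3$ every vertex has a true twin, yet $n$ is odd and no perfect matching of twin-pairs exists (and indeed $\mathrm{ldim}_{\mathrm{f}}(K_3)=3/2$ must still come out). The repair is to use that the true-twin relation is an equivalence relation; on a class $C$ with $|C|=c\geq 2$, summing $\vartheta(v)+\vartheta(w)\geq 1$ over all $\binom{c}{2}$ pairs yields $\sum_{v\in C}\vartheta(v)\geq c/2$, and summing over classes gives $n/2$. In the converse direction, the proposed induction on the graph obtained by deleting a twin-pair does not work: deletion can disconnect the graph, alters all distances and hence all resolving neighbourhoods, and there is no reason the smaller graph inherits $\mathrm{ldim}_{\mathrm{f}}=(n-2)/2$. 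A direct argument is both simpler and correct: if some vertex $u$ has no true twin, set $\vartheta\equiv\frac12$ except $\vartheta(u)=\frac12-\varepsilon$; every edge with $|\mathcal{R}\{v,w\}|\geq 3$ still meets its constraint, and every edge with $|\mathcal{R}\{v,w\}|=2$ is a true-twin pair, which by hypothesis avoids $u$, so $\vartheta$ is a local resolving function of weight $\frac n2-\varepsilon$, contradicting $\mathrm{ldim}_{\mathrm{f}}(G)=\frac n2$.
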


\vspace{0.1cm}

\begin{example}\label{Example0}
Let $G$ be the planar graph described in Figure \ref{Figure_Example0}. In order to determine an upper bound of $\mathrm{ldim}_{\mathrm{f}}(G)$, we determine the cardinality of its local resolving neighbourhoods. To this end, the symmetry of the graph $G$ enables us to focus on the following four resolving neighbourhoods.
\[\overline{\mathcal{R}}\{v_1,v_7\}=\{v_{10}, v_{14}, v_{15}, v_{16}, v_{20}, v_{21}\}.\]
\[\overline{\mathcal{R}}\{v_7,v_{13}\}=\{v_{2}, v_{5}, v_{8}, v_{12}\}.\]
\[\overline{\mathcal{R}}\{v_{13},v_{18}\}=\{v_{3}, v_{6}, v_{21}, v_{24}\}.\]
\[\overline{\mathcal{R}}\{v_{13},v_{24}\}=\{v_{4}, v_{5}, v_{10}, v_{11},v_{12}, v_{16}, v_{17}, v_{18}, v_{22}, v_{23}\}.\]
Thus, $\ell(G)=14$ and hence, Condition (\ref{eq_Benish2}) implies that  $\mathrm{ldim}_{\mathrm{f}}(G)\leq \frac{24}{14}=\frac{12}{7}$. In fact, a simple computation establishes that $\mathrm{ldim}_{\mathrm{f}}(G)=\frac 32$.
\end{example}

\begin{figure}[ht]
\begin{center}
{\includegraphics[scale=0.14]{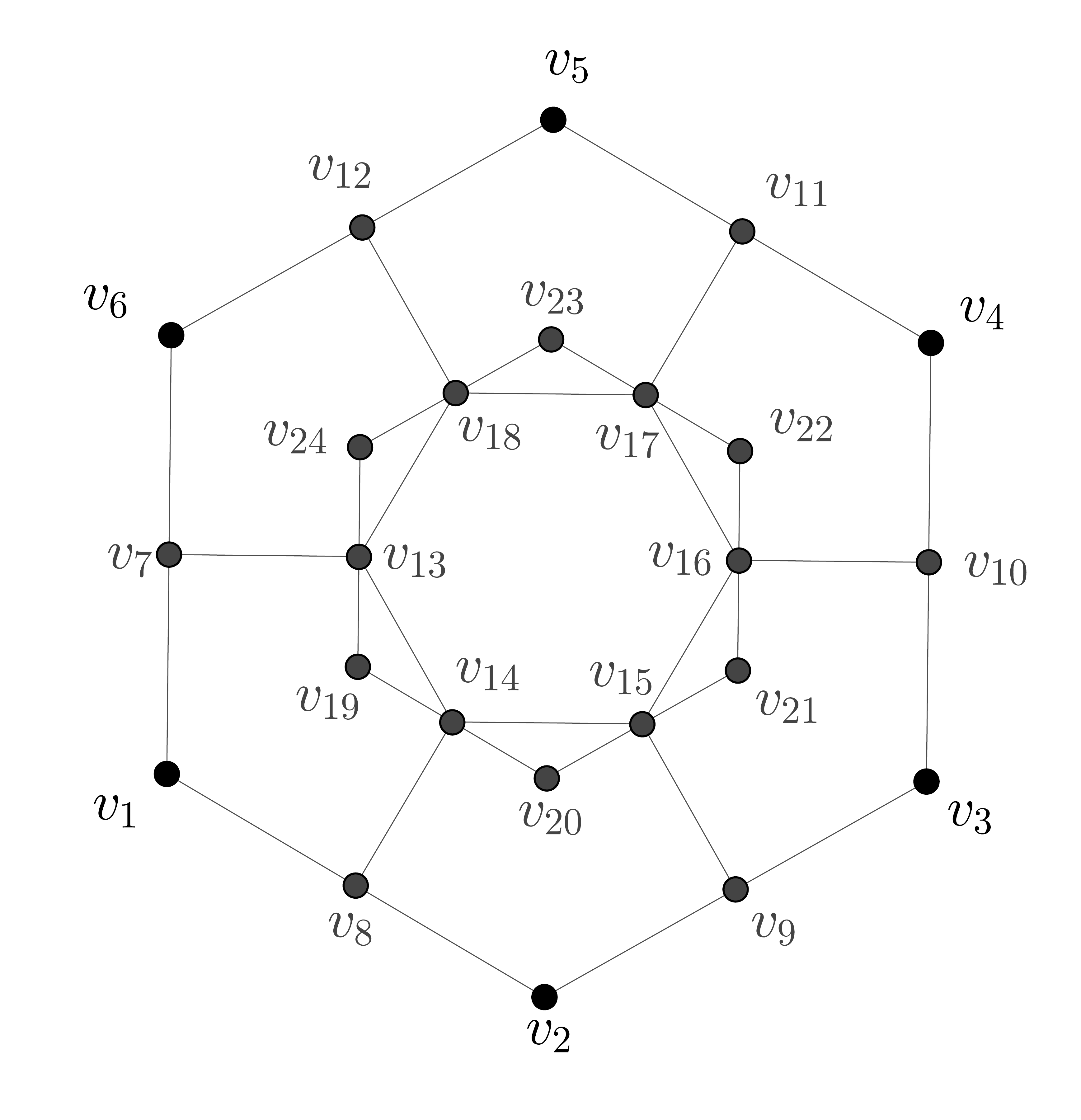}}\caption{Planar graph $G$ such that $\mathrm{ldim}_{\mathrm{f}}(G)=\frac 32\leq \frac {12}7$.}\label{Figure_Example0}
\end{center}
\end{figure}

\section{A family of rotationally symmetric planar graphs arisen from planar chorded cycles}\label{sec:family}

This paper focuses on the asymptotic behaviour of the local fractional metric dimension of a particular family of rotationally symmetric planar graphs. They arise from a sequential edge coalescence among a series of disjoint copies of a given planar chorded cycle. In this section, we detail their construction and establish some basic results concerning their local fractional metric dimension.

\vspace{0.1cm}

Let $G_1,\ldots, G_m$ be $m$ disjoint copies, with $m\geq 2$, of a planar chorded cycle $G$ of order $n$, whose set of vertices is $V(G)=\{v_1,\ldots,v_n\}$. Let $v_i^k\in V(G_k)$ denote the corresponding copy of each vertex $v_i\in V(G)$. Without loss of generality, we assume that the vertices are naturally labeled counterclockwise. Then, we are interested in the planar graph $\mathcal{G}^m(G)$ that is sequentially defined as follows.
\begin{itemize}
    \item Let $\mathcal{G}_1(G):= G_1\cdot G_2(v_2^1v_3^1,v_{n-1}^2v_n^2:v_{n-1}^2v_n^2)$. (Notice that we label the merged new edge in the same way that the edge in the second graph under consideration. The same is done in the subsequent steps.)

    \vspace{0.1cm}

    \item Let  $\mathcal{G}_k(G):= \mathcal{G}_{k-1}(G)\cdot G_{k+1}(v_2^kv_3^k,v_{n-1}^{k+1}v_n^{k+1}:v_{n-1}^{k+1}v_n^{k+1})$, for each positive integer $k\in\{2,\ldots,m-1\}$.

    \vspace{0.1cm}

    \item Finally, let $\mathcal{G}^m(G):=\mathcal{G}_{m-1}(G)\cdot \mathcal{G}_{m-1}(G)(v_2^mv_3^m,v_{n-1}^1v_n^1:v_{n-1}^1v_n^1)$.
\end{itemize}
The resulting graph $\mathcal{G}^m(G)$ is a rotationally symmetric planar graph of order $m\cdot (n-2)$.

\vspace{0.1cm}

Based on the planar chorded cycles described in Figure \ref{Figure_PCC}, we enumerate in Figure \ref{Figure_PG} all the vertex-labeled planar chorded cycles of order $n\leq 6$ on which the just defined constructive procedure may be implemented in order to get non-isomorphic rotationally symmetric planar graphs. From here on, we refer them as {\em quadrilateral} ($Q_1$ and $Q_2$, for $n=4$), {\em pentagonal} ($P_1$ to $P_6$, for $n=5$) or {\em hexagonal} ($H_1$ to $H_{17}$, for $n=6$) {\em chorded cycles}.

\begin{figure}[ht]
\begin{center}
\begin{tabular}{c}
{\includegraphics[scale=0.4]{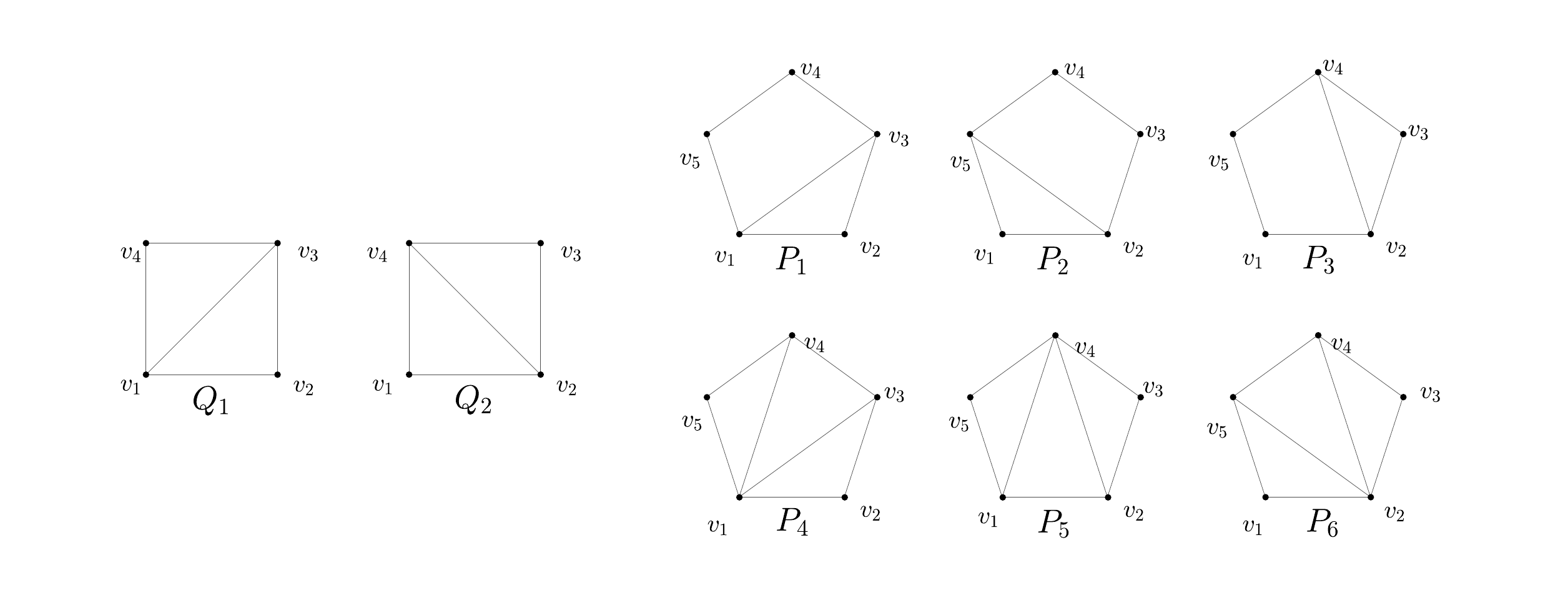}}\\
{\includegraphics[scale=0.52]{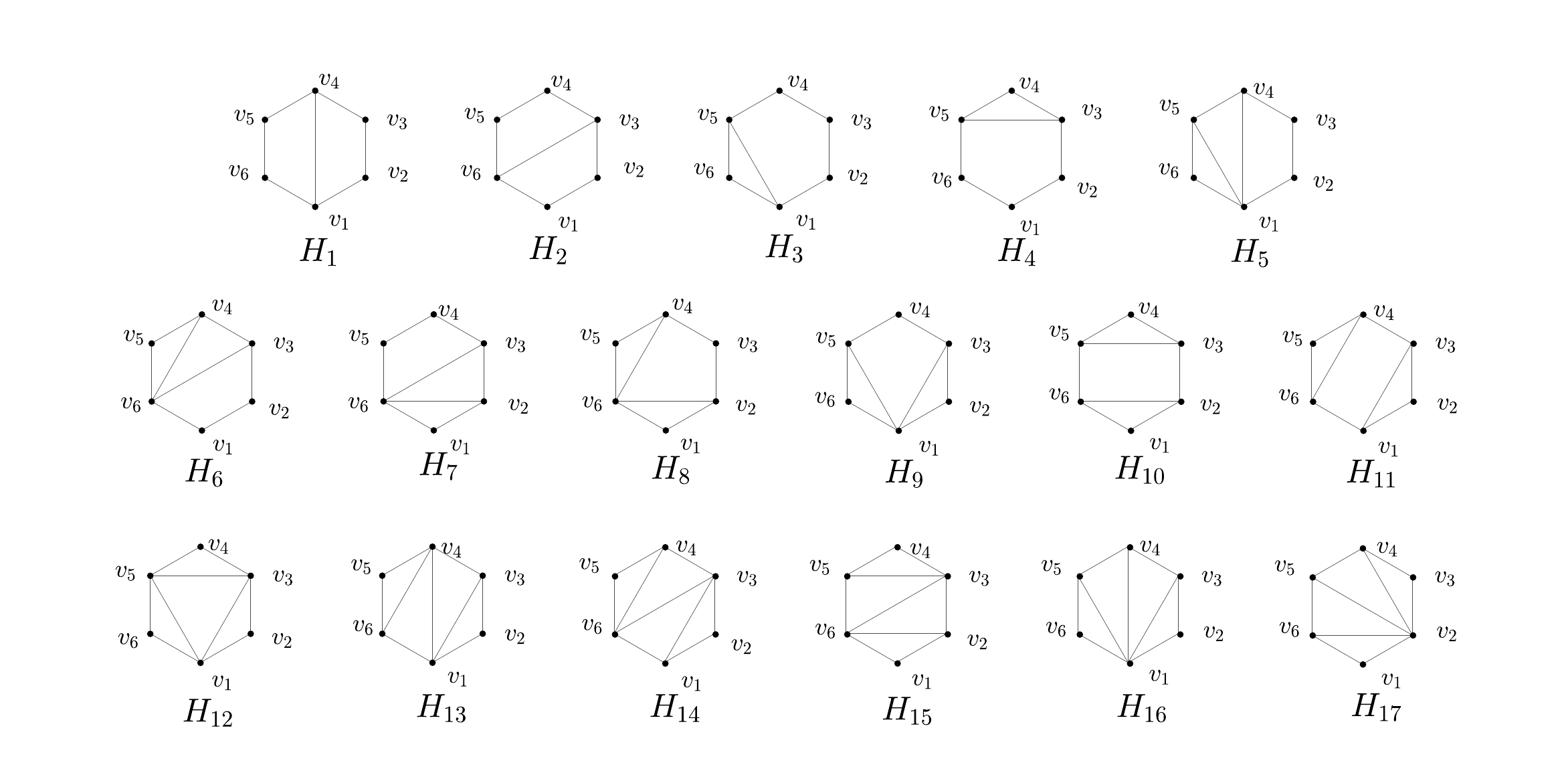}}
\end{tabular}
\caption{Quadrilateral, pentagonal and hexagonal  chorded cycles.}\label{Figure_PG}
\end{center}
\end{figure}

Figure \ref{Figure_RSPG4} illustrates a representation of each one of the two non-isomorphic rotationally symmetric planar graphs arisen from the quadrilateral chorded cycles $Q_1$ and $Q_2$. In addition, Figure \ref{Figure_HnH6} illustrates the representations of a pair of rotationally symmetric planar graphs arisen from the pentagonal chorded cycle $P_4$ and the hexagonal chorded cycle $H_5$. Notice that the first and last vertical edges in each one of these two representations refer to the same edge. In a similar way, Figures \ref{Figure_PG5} and \ref{Figure_RSPG} outline all the rotationally symmetric planar graphs arisen from pentagonal and hexagonal chorded cycles. Notice that, even if the vertex-labeling has been omitted for making clearer the illustration, it coincides in each case with that one described in Figure \ref{Figure_HnH6}.

\begin{figure}[ht]
\begin{center}
{\includegraphics[scale=0.2]{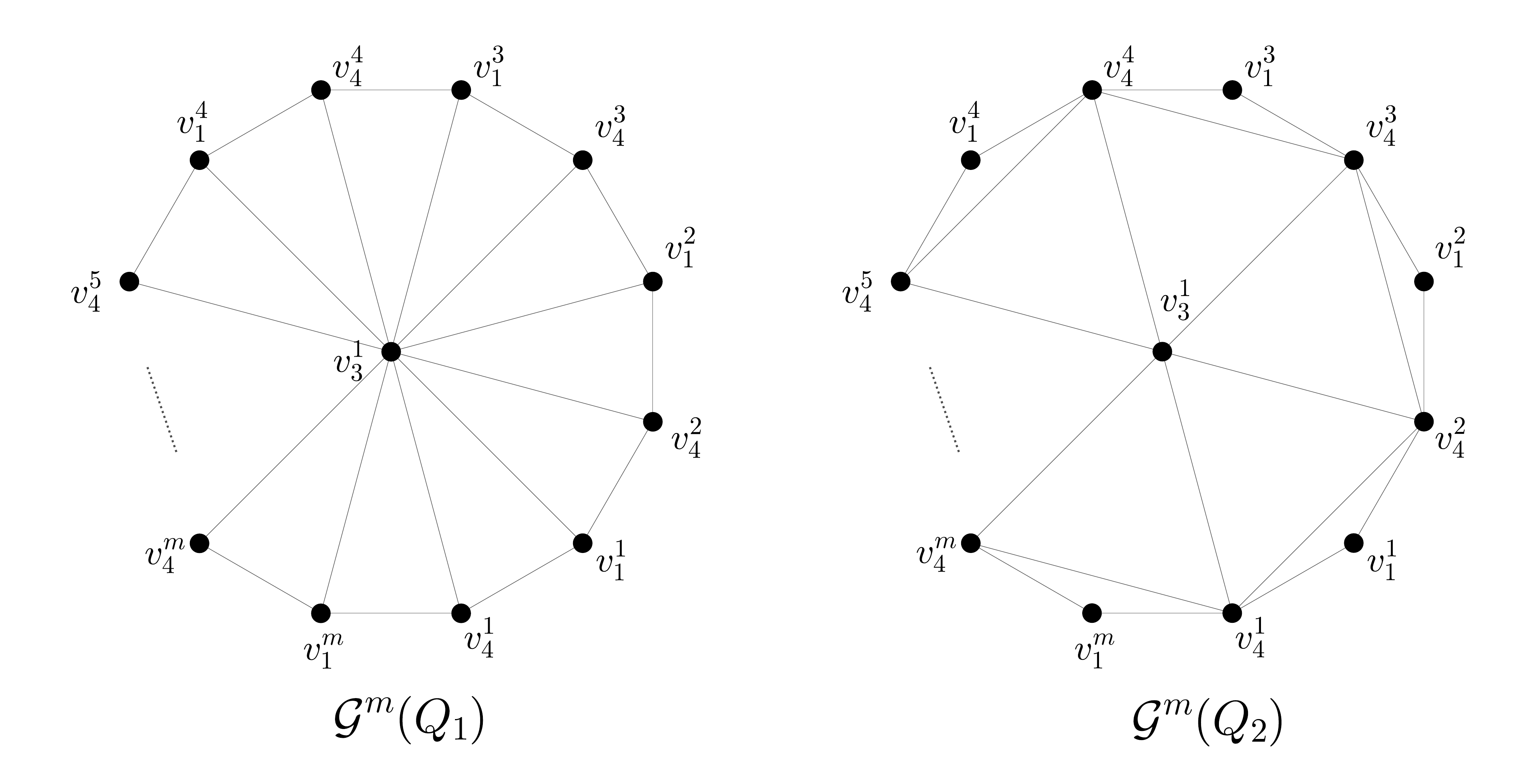}}\caption{The graphs $\mathcal{G}^m(Q_1)$ and $\mathcal{G}^m(Q_2)$.}\label{Figure_RSPG4}
\end{center}
\end{figure}

\begin{figure}[ht]
\begin{center}
{\includegraphics[scale=0.063]{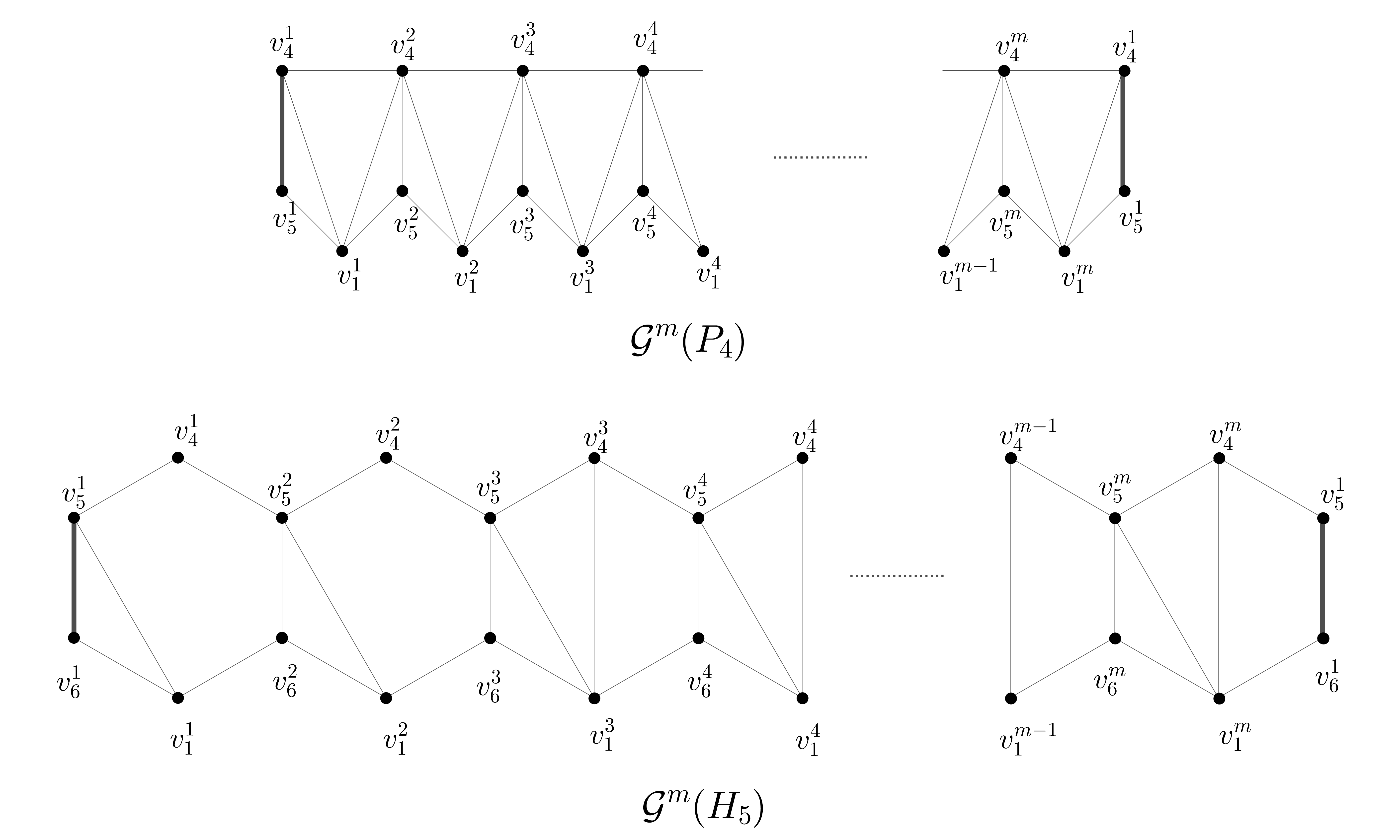}}\caption{The graphs $\mathcal{G}^m(P_4)$ and $\mathcal{G}^m(H_5)$}\label{Figure_HnH6}
\end{center}
\end{figure}

\begin{figure}[ht]
\begin{center}
{\includegraphics[scale=0.061]{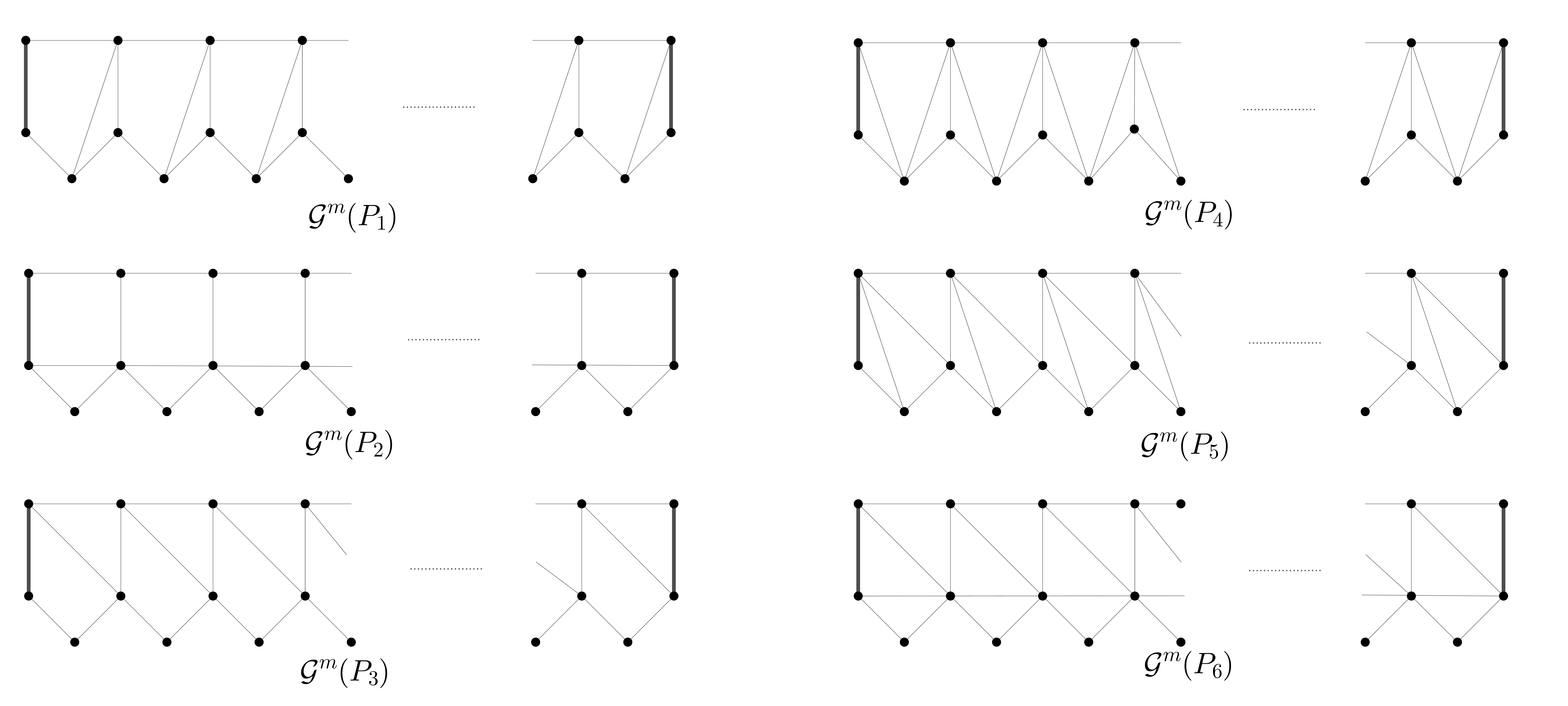}}\caption{Rotationally symmetric planar graphs arisen from pentagonal chorded cycles.}\label{Figure_PG5}
\end{center}
\end{figure}

\begin{figure}[ht]
\begin{center}
{\includegraphics[scale=0.3]{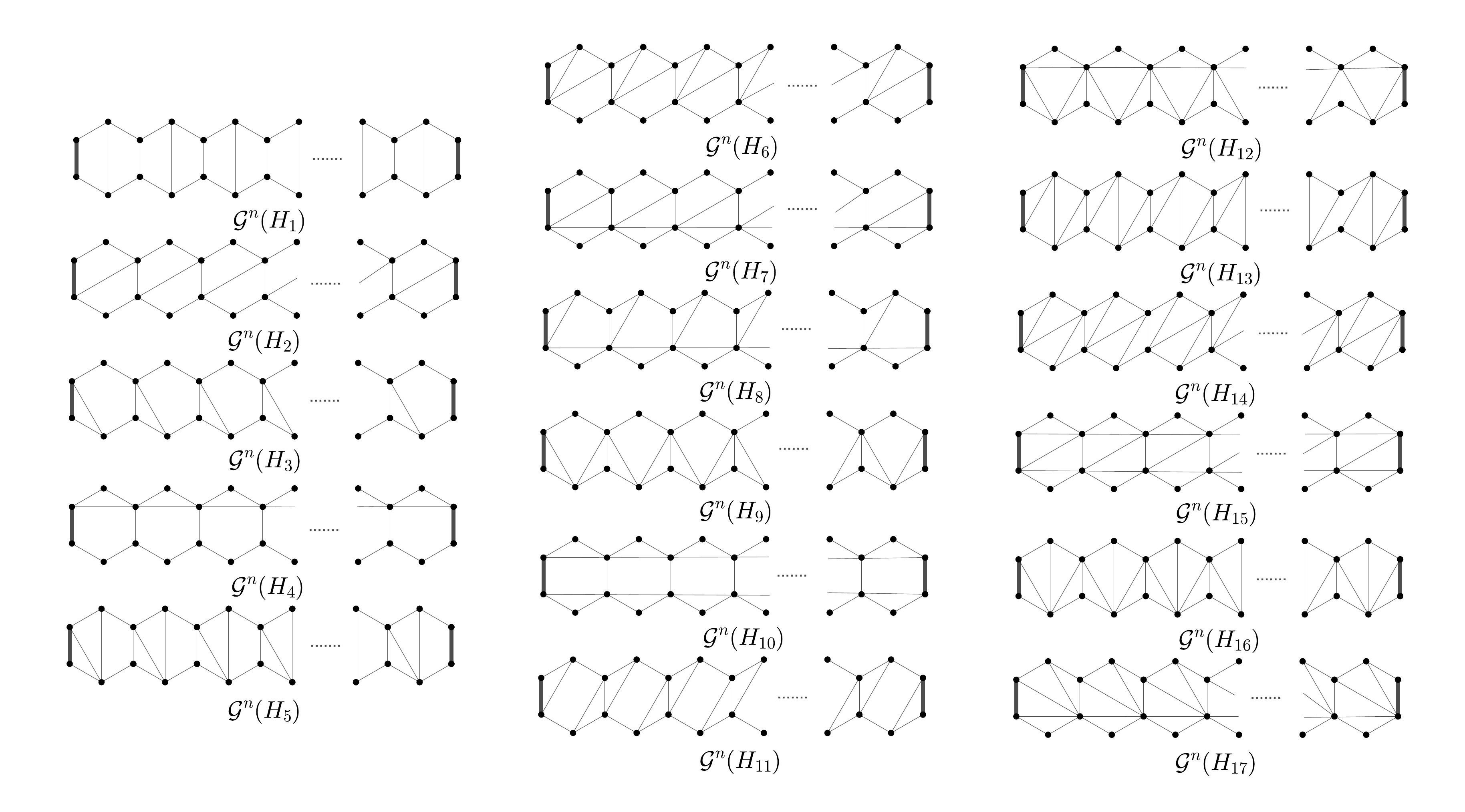}}\caption{Rotationally symmetric planar graphs arisen from hexagonal chorded cycles.}\label{Figure_RSPG}
\end{center}
\end{figure}

Table \ref{Table1} illustrates the local fractional metric dimension of each one of the planar chorded cycles described in Figure \ref{Figure_PG}. Notice in particular that the minimum value is reached for both graphs $H_1$ and $H_2$, which constitute different vertex-labelings of the same bipartite graph (see Lemma \ref{LemmaBenish}). In order to determine the remaining values, we have computationally solved the linear programming problem associated to each case \cite{Currie2001}.

\begin{example}\label{ExampleH5} The linear programming problem based on the local fractional metric dimension of the hexagonal chorded cycle $H_5$ consists of finding a map $\vartheta:V(H_5)\rightarrow [0,1]$ that minimizes $\sum_{1\leq i\leq 6} \vartheta(v_i)$ subject to
\[\begin{array}{ll}
\begin{cases}
\sum_{1\leq i\leq 6} \vartheta(v_i)\geq 1,\\
\vartheta(v_1)+\vartheta(v_2)+\vartheta(v_3)+\vartheta(v_4)+\vartheta(v_6)\geq 1,\\
\vartheta(v_3)+\vartheta(v_4)+\vartheta(v_5)+\vartheta(v_6)\geq 1,\\
\vartheta(v_1)+\vartheta(v_2)+\vartheta(v_5)\geq 1.
\end{cases}
\end{array}\]
Since $\mathcal{R}\{v_1,v_2\}=\mathcal{R}\{v_2,v_3\}=\mathcal{R}\{v_3,v_4\}=V(H_5)$, these three edges are related to the constraint  $\sum_{1\leq i\leq 6} \vartheta(v_i)\geq 1$. The second constraint follows from $\mathcal{R}\{v_1,v_4\}$ and $\mathcal{R}\{v_1,v_6\}$. The third one follows from $\mathcal{R}\{v_4,v_5\}$ and $\mathcal{R}\{v_5,v_6\})$. Finally, the fourth one follows from $\mathcal{R}\{v_1,v_5\}$. An optimal solution of this problem is the resolving function $\vartheta$ of $H_5$ satisfying that $\vartheta(v_i)=\frac 12$, if $i\in \{1,3,5\}$, and zero, otherwise. Hence, $\mathrm{ldim}_{\mathrm{f}}(H_5)=\frac 32$.
\end{example}

We are interested in the asymptotic behaviour of the local fractional metric dimension of each rotationally symmetric planar graph arisen from the planar chorded cycles described in Figure \ref{Figure_PG}. In this regard, we finish this section by dealing with those graphs arisen from quadrilateral chorded cycles. Those ones arisen from pentagonal and hexagonal chorded cycles are studied in Section \ref{sec:F1}. Firstly, we prove a preliminary lemma concerning the local fractional metric dimension of a wheel. (Notice that it differs from the unproven Theorem 3 in \cite{Aysiah2020}, whose third assertion seems not to be true.)

\begin{lemma}\label{LemmaW}  Let $W_n$ be the wheel graph of order $n\geq 4$. Then,
\[\mathrm{ldim}_{\mathrm{f}}(W_n)=\begin{cases}\begin{array}{cl}
2,&\text{ if } n=4,\\
\frac 32 ,&\text{ if } n\in\{5,6\},\\
\frac {n-1}4,& \text{ otherwise}.
\end{array}
\end{cases}\]
\end{lemma}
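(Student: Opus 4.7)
The plan is to solve directly the linear program associated with the local fractional metric dimension, after first cataloguing the local resolving neighbourhoods of $W_n$ and exploiting the rotational symmetry of the wheel.

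First I would label the centre as $c$ and the cycle vertices as $v_1,\ldots,v_{n-1}$ (indices modulo $n-1$), and observe that in $W_n$ any two cycle vertices that are not adjacent on the rim lie at distance $2$ through the hub. A routine distance check then shows, for $n\ge 5$, that $\mathcal{R}\{c,v_i\}=V(W_n)\setminus\{v_{i-1},v_{i+1}\}$ has cardinality $n-2$, while $\mathcal{R}\{v_i,v_{i+1}\}=\{v_{i-1},v_i,v_{i+1},v_{i+2}\}$ has cardinality $4$. For $n=4$ the graph $W_4$ coincides with $K_4$ and both families collapse to pairs.

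I would dispose of the case $n=4$ immediately: since $W_4=K_4$, every vertex has the same closed neighbourhood as every other, so Lemma~\ref{LemmaBenish}(2) forces $\mathrm{ldim}_{\mathrm{f}}(W_4)=n/2=2$. Alternatively, summing the constraints $\vartheta(u)+\vartheta(v)\ge 1$ over all six edges of $K_4$ yields $3\sum_v\vartheta(v)\ge 6$, giving the same conclusion.

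For $n\ge 5$ I would use the cyclic symmetry of the wheel to reduce the LP to two variables. Because the rotations $v_i\mapsto v_{i+1}$ permute the local-resolving constraints, averaging any feasible local resolving function over this cyclic group yields a feasible function with the same objective value satisfying $\vartheta(v_1)=\cdots=\vartheta(v_{n-1})=:t$ and $\vartheta(c)=:s$. The LP then collapses to minimising $(n-1)t+s$ subject only to $4t\ge 1$ (from the rim edges) and $s+(n-3)t\ge 1$ (from the centre edges), with $s,t\in[0,1]$. A short case analysis recovers the claimed values: for $n\in\{5,6\}$ the optimum occurs at $t=1/4$, $s=(7-n)/4$, yielding $(n-1)/4+(7-n)/4=3/2$; for $n\ge 7$ the coefficient $(n-3)/4$ is already at least $1$, so the centre constraint is slack at $(t,s)=(1/4,0)$ and the optimum equals $(n-1)/4$.

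The main obstacle I anticipate is justifying the symmetry reduction cleanly: one must verify that the rotational action permutes the local-resolving constraints, so that the averaging trick preserves both feasibility and the objective value. Once this step is secure, the remaining two-variable LP is transparent, and the three claimed values $2$, $3/2$, and $(n-1)/4$ follow by identifying which of the two active-constraint patterns is binding in each regime of $n$.
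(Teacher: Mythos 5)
Your proof is correct, and it follows the same skeleton as the paper's argument --- catalogue the local resolving neighbourhoods of $W_n$ and solve the resulting linear programme --- but it is genuinely more complete where it matters. The paper settles $n\le 6$ by appealing to a computational LP solution and, for $n>6$, merely exhibits the function $\vartheta(w)=0$, $\vartheta(v_i)=\tfrac14$ and asserts it is optimal, with no lower-bound certificate. Your symmetrization supplies exactly the missing half: the rotation $v_i\mapsto v_{i+1}$ (fixing the centre) is an automorphism, so it permutes the edges and hence the local-resolving constraints, and averaging any feasible $\vartheta$ over the cyclic group preserves both feasibility (the feasible region is convex) and the objective. The reduced programme $\min\{(n-1)t+s:\ 4t\ge 1,\ s+(n-3)t\ge 1,\ s,t\ge 0\}$ is then solved exactly, yielding the upper \emph{and} lower bounds for all $n\ge 5$ uniformly, including the small cases the paper delegates to a computer; your values $t=\tfrac14$, $s=\max\{0,\tfrac{7-n}{4}\}$ check out, and your neighbourhoods $\mathcal{R}\{v_i,v_{i+1}\}=\{v_{i-1},v_i,v_{i+1},v_{i+2}\}$ and $\mathcal{R}\{c,v_i\}=V(W_n)\setminus\{v_{i-1},v_{i+1}\}$ agree with the constraints displayed in the paper's proof. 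Your handling of $n=4$ via true twins coincides with the paper's (note that the twin condition must be read with \emph{closed} neighbourhoods, as you do); the alternative of summing the six edge constraints of $K_4$ to get $3\sum_v\vartheta(v)\ge 6$ is an equally valid substitute. The only step to spell out in a final write-up is the feasibility of the group average, which is immediate from convexity of the feasible region.
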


\begin{proof} The case $n\leq 6$ follows directly from solving the corresponding linear programming problem. In any case, notice that the case $n=4$ holds readily from Lemma \ref{LemmaBenish} once it is observed that every pair of vertices within the wheel graph $W_4$ are true twin.

Now, in order to deal with the case $n>6$, let us suppose that $V(W_n)=\left\{v_0,\ldots,v_{n-2},w\right\}$. Here, $w$ denotes the center of the wheel graph $W_n$. In addition, $v_iv_{i+1}\in E(W_n)$, for every non-negative integer $i<n$, where, from here on, all the indices are taken modulo $(n-1)$. Then, the computation of $\mathrm{ldim}_{\mathrm{f}}(W_n)$ requires to minimize $\vartheta(w)+\sum_{0\leq i\leq n-2} \vartheta(v_i)$ subject to
\[\begin{array}{ll}
\begin{cases}
\begin{array}{lr}
\begin{array}{l}
\vartheta(v_{i-1})+\vartheta(v_i)+\vartheta(v_{i+1})+\vartheta(v_{i+2})\geq 1,\\
\vartheta(w)-\vartheta(v_{i-1})-\vartheta(v_{i+1})+\sum_{0\leq i\leq n-2}\vartheta(v_i),
\end{array}& \forall i\leq n-2.
\end{array}
\end{cases}
\end{array}\]
The first constraint derives from  $\mathcal{R}\{v_i,v_{i+1}\}$, whereas the second one derives from $\mathcal{R}\{w,v_i\}$. An optimal solution of this problem is the resolving function $\vartheta$ of the wheel graph $W_n$ satisfying that $\vartheta(w)=0$ and $\vartheta(v_i)=\frac 14$, if $0\leq i\leq n-2$. Hence, $\mathrm{ldim}_{\mathrm{f}}(W_n)=\frac {n-1}4$.
\end{proof}

The following result establishes the local fractional metric dimension of $\mathcal{G}^m(Q_1)$ and $\mathcal{G}^m(Q_2)$.

\begin{proposition}\label{Proposition_GQ} Let $m\geq 2$ be a positive integer. Then,
\[\mathrm{ldim}_{\mathrm{f}}(\mathcal{G}^m(Q_1))=\begin{cases}\begin{array}{ll}
\frac 32,& \text{ if } m=2,\\
\frac m2,& \text{ otherwise}.
\end{array}
\end{cases} \hspace{0.5cm} \mathrm{ldim}_{\mathrm{f}}(\mathcal{G}^m(Q_2))=\begin{cases}\begin{array}{ll}
\frac 32,& \text{ if } m\leq 4,\\
\frac m4,& \text{ otherwise}.
\end{array}
\end{cases}\]
\end{proposition}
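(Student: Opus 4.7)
The plan is to compute each of the four values $\mathrm{ldim}_{\mathrm{f}}(\mathcal{G}^m(Q_i))$ as the optimum of its associated linear program, matching an explicit local resolving function from above with a feasible dual solution from below, in the spirit of Example \ref{ExampleH5} and the proof of Lemma \ref{LemmaW}.

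First I would fix a labeling of $V(\mathcal{G}^m(Q_i))$ compatible with the construction of Section \ref{sec:family}. Since each copy of $Q_i$ contributes exactly two vertices to the final graph (the other two being absorbed by the adjacent edge-identifications), we can write $V(\mathcal{G}^m(Q_i))=\{u_k,w_k\colon 1\leq k\leq m\}$ in such a way that the cyclic map $\rho\colon u_k\mapsto u_{k+1}$, $w_k\mapsto w_{k+1}$ (indices mod $m$) is a graph automorphism. Under this $\mathbb{Z}_m$-action the edges split into a constant number of orbits (at most five, one per edge of $Q_i$ modulo identification), so for each orbit representative $vw$ it suffices to compute $\overline{\mathcal{R}}\{v,w\}$ by identifying the vertices equidistant from $v$ and $w$. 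For $m$ large enough these complements are localised near $vw$ and have $O(1)$ size, which is the combinatorial input needed for Inequality (\ref{eq_Benish2}).

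For the upper bound in the generic regime, I would propose, for $\mathcal{G}^m(Q_1)$ with $m\geq 3$, the resolving function giving weight $\tfrac12$ to every vertex of one suitable $\mathbb{Z}_m$-orbit and $0$ elsewhere (total weight $m/2$), and for $\mathcal{G}^m(Q_2)$ with $m\geq 5$ the resolving function giving weight $\tfrac14$ to every vertex of a single orbit (total weight $m/4$); in the second case this matches the bound $|V|/\ell$ of Inequality (\ref{eq_Benish2}) with $\ell=8$. Verifying that these maps are local resolving functions reduces, by rotational symmetry, to checking one inequality per edge orbit, all of which are furnished by the first step. For the boundary cases ($m=2$ for $Q_1$ and $m\leq 4$ for $Q_2$), the graph has at most eight vertices and the LP can be solved directly, in the spirit of Example \ref{ExampleH5}, producing the value $\tfrac32$ in each instance.

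For the matching lower bound, I would appeal to LP duality: exhibit non-negative weights $y_{vw}$ on the edge constraints (\ref{eq_RF}) summing to the claimed value and satisfying $\sum_{vw\colon u\in\mathcal{R}\{v,w\}}y_{vw}\leq 1$ for every $u\in V(\mathcal{G}^m(Q_i))$. Rotational symmetry permits an orbit-invariant dual, reducing the search to a small LP in a constant number of variables independent of $m$. The main obstacle will lie precisely here: the dual weights must be chosen so that several edge orbits contributing to the same vertex orbit add up to at most $1$, and moreover a separate dual certificate must be produced for the boundary cases, where the generic symmetric solution degenerates because either orbits collide or some local resolving neighbourhoods become trivially equal to the full vertex set.
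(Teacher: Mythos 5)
Your LP framework (primal solution plus dual certificate, organised by $\mathbb{Z}_m$-orbits) has the right general shape, and for $\mathcal{G}^m(Q_2)$ it coincides with what the paper does: list the edge-orbit constraints, exhibit the primal solution $\vartheta(v_4^i)=\frac14$, and settle $m\le 4$ by direct computation. But two things go wrong. First, the matching lower bound --- the half of each equality that turns ``$\le$'' into ``$=$'' --- is precisely the part you defer (``the main obstacle will lie precisely here''); no dual weights are actually produced, so as written your argument yields only upper bounds, and Condition (\ref{eq_Benish2}) cannot close the gap for you since its lower bound involves $\mathrm{ldim}$, which you also do not compute. Second, for $Q_1$ you miss the structural fact on which the paper's proof of that half rests: $\mathcal{G}^m(Q_1)$ is the wheel graph $W_{2m+1}$, so the exact values $\frac32$ (for $m=2$) and $\frac m2=\frac{(2m+1)-1}{4}$ (for $m\ge 3$) are read off directly from Lemma \ref{LemmaW}. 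Your assumed vertex structure --- $2m$ vertices carrying a free $\mathbb{Z}_m$-action in two orbits --- is incompatible with this: the hub is a fixed point of the rotation, and the constraints coming from the hub--rim edges defeat your proposed primal solution for small $m$ (putting $\frac12$ on alternate rim vertices of $W_7$, the constraint from $\mathcal{R}\{w,v_i\}=V(W_7)\setminus\{v_{i-1},v_{i+1}\}$ evaluates to $\frac32-1=\frac12<1$). So even your upper bound for $Q_1$ at $m=3$ needs the uniform weight $\frac14$ on the rim, not the concentrated one.

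A smaller point in the same direction: for $Q_2$ with $m=5$, your single-orbit function with weight $\frac14$ yields only $\frac{m-2}{4}=\frac34$ on the constraints arising from $\mathcal{R}\{v_1^i,v_4^i\}$ and $\mathcal{R}\{v_3^1,v_4^i\}$ as the paper records them, so feasibility of that particular function is not automatic at the low end of your ``generic regime''; the safe route to the upper bound $\frac m4$ is the uniform function $\frac1{\ell}$ underlying Condition (\ref{eq_Benish2}) together with $\ell(\mathcal{G}^m(Q_2))=8$.
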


\begin{proof} For $Q_1$, the result holds from Lemma \ref{LemmaW}, because the planar graph $\mathcal{G}^m(Q_1)$ is a wheel graph of order $2m+1$. For $Q_2$, the case $m\in\{2,3,4\}$ follows directly from solving the corresponding linear programming problem. In order to deal with $m\geq 5$, the computation of $\mathrm{ldim}_{\mathrm{f}}(\mathcal{G}^m(Q_2))$ requires to solve the linear programming problem consisting on minimizing the objective function $\vartheta(v_3^1)+\sum_{1\leq i\leq m} (\vartheta(v_1^i)+\vartheta(v_4^i))$ so that the following constraints hold, for all $i\leq m$. Here, the superscripts are all of them taken modulo $m$.
    \begin{itemize}
        \item From $\mathcal{R}\{v_1^i,v_4^i\}$, it is $\vartheta(v_3^1)+\sum_{j\not\in\{i+1,\, i+2\}}\left(\vartheta(v_1^j)+\vartheta(v_4^j)\right)\geq 1$.

        \item From $\mathcal{R}\{v_3^1,v_4^i\}$, it is $\vartheta(v_3^1)+\sum_{j\not\in\{i-2,\, i+1\}}\vartheta(v_1^j)+\sum_{j\not\in\{i-1,\, i+1\}}\vartheta(v_4^j)\geq 1$.

         \item From $\mathcal{R}\{v_4^i,v_4^{i+1}\}$, it is $\vartheta(v_1^{i-2})+\vartheta(v_1^{i-1})+\vartheta(v_1^{i+1})+\vartheta(v_1^{i+2})+\vartheta(v_4^{i-1})+\vartheta(v_4^i)+\vartheta(v_4^{i+1})+\vartheta(v_4^{i+2})\geq 1$.
    \end{itemize}

An optimal solution of this problem is the resolving function $\vartheta$ of the wheel graph $W_n$ such that $\vartheta(v)=\frac 14$, if $v=v_4^i$, for some positive integer $i\leq m$, and zero, otherwise. Hence, $\mathrm{ldim}_{\mathrm{f}}(W_n)=\frac m4$.
\end{proof}

\section{Rotationally symmetric planar graphs based on pentagonal and hexagonal chorded cycles}\label{sec:F1}

This section studies the local fractional metric dimension problem of the rotationally symmetric planar graphs arisen from the pentagonal chorded cycles $P_1$ to $P_6$ and the hexagonal chorded cycles $H_1$ to $H_{17}$.

\subsection{The pentagonal case}

Except for $\mathcal{G}^m(P_1)$ and  $\mathcal{G}^m(P_5)$, this problem has recently been dealt with by Liu et al. \cite{Liu2020}, who have proved the following result.

\begin{proposition}[\cite{Liu2020}] \label{P2P5} Let $m\geq 4$ be an even positive integer. For each $i\in\{2,4,6\}$,
\[\mathrm{ldim}_{\mathrm{f}}(\mathcal{G}^m(P_i))\leq \frac {6m}{3m+2},\]
\end{proposition}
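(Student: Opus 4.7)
The plan is to apply the second inequality of Condition~(\ref{eq_Benish2}) in Lemma~\ref{LemmaBenish}, which yields $\mathrm{ldim}_{\mathrm{f}}(G)\leq n/\ell(G)$. Since each graph $\mathcal{G}^m(P_i)$ has order $3m$, this reduces the proposition to establishing the lower bound
\[\ell(\mathcal{G}^m(P_i))\geq\frac{3m+2}{2}\]
for every $i\in\{2,4,6\}$; note that the right-hand side is a positive integer precisely because $m$ is even. Equivalently, I would prove that $|\overline{\mathcal{R}}\{v,w\}|\leq (3m-2)/2$ for every edge $vw$. Once this is in place, the constant map $\vartheta\equiv 2/(3m+2)$ is automatically a local resolving function whose total mass is $3m\cdot\frac{2}{3m+2}=\frac{6m}{3m+2}$.

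Next I would exploit the rotational symmetry of $\mathcal{G}^m(P_i)$. The cyclic group of order $m$ generated by the rotation sending each copy of $P_i$ to the next permutes the edge set, and hence also the family of local resolving neighbourhoods, so only finitely many orbit representatives need to be analysed. Each representative can be taken from the edges of a single fixed copy of $P_i$ (together with the merged edge shared with a neighbouring copy). For each such edge $vw$, I would partition the vertex set of $\mathcal{G}^m(P_i)$ copy by copy and, within each copy, tally the vertices $u$ satisfying $d(v,u)=d(w,u)$, i.e.\ those contributing to $\overline{\mathcal{R}}\{v,w\}$.

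The hard part will be the explicit distance computation. Every vertex has two natural routes around the outer cyclic structure, and the chord(s) present in $P_i$ occasionally provide a shortcut, so the distance from an arbitrary vertex $u$ to a fixed vertex $v$ depends sensitively on the relative position of their copies and on whether the shortest route is clockwise, counterclockwise, or mixes a chord with a cyclic detour. The evenness of $m$ enters here: it forces the antipodal copy (the one at cyclic distance $m/2$) to be unique and ensures that both endpoints of an edge reach it symmetrically, which is what allows the estimate $(3m-2)/2$ to be sharp rather than off by a parity-dependent constant. With these distances tabulated, each copy contributes an amount of $\overline{\mathcal{R}}\{v,w\}$-vertices bounded independently of $m$, and summing over the $m$ copies produces a leading term $\frac{3}{2}m$ with a small additive correction accounting for the copies containing $v$, $w$ and their antipode.

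Finally, once the worst-case edge has been identified in each of the three cases $i\in\{2,4,6\}$ and shown to attain $|\mathcal{R}\{v,w\}|=(3m+2)/2$, the claimed inequality follows from Condition~(\ref{eq_Benish2}). A uniform argument across $P_2$, $P_4$ and $P_6$ is unlikely because their chord structures differ (as visible in Figure~\ref{Figure_PG}), so I expect three parallel but independent distance tabulations, most conveniently displayed in tables analogous to those collected in the appendix.
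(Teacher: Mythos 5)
First, a point of reference: the paper itself does not prove this proposition --- it is imported verbatim from \cite{Liu2020} and is only invoked (in the proof of Theorem \ref{P1}) to cover the even-$m$ branch for $P_2$, $P_4$ and $P_6$. That said, your strategy --- lower-bound $\ell(\mathcal{G}^m(P_i))$ by $\frac{3m+2}{2}$ and apply the middle inequality of Condition (\ref{eq_Benish2}) with $n=3m$ --- is exactly the method the paper uses for every case it does prove itself (Proposition \ref{P0} feeding into Theorem \ref{P1}), and your target value is consistent with the tabulated data: for even $m$ the worst edges of $\mathcal{G}^m(P_5)$ have $|\mathcal{R}|=\frac{3m+2}{2}$, and the values $\frac{3m+3}{2}$ that Proposition \ref{P0} reports for $P_2$, $P_4$, $P_6$ are non-integers at even $m$ and can only be read as the odd-$m$ branch. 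Your reduction to $|\overline{\mathcal{R}}\{v,w\}|\leq\frac{3m-2}{2}$, the observation that the constant function $\vartheta\equiv 2/(3m+2)$ is then automatically a local resolving function of total mass $\frac{6m}{3m+2}$, and the use of the rotation group to cut the verification down to one representative per edge orbit are all correct.

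The gap is that everything after this reduction is a promise rather than an argument. The entire content of the proposition is the verification that \emph{every} edge orbit of each of the three graphs satisfies $|\mathcal{R}\{v,w\}|\geq\frac{3m+2}{2}$, and you carry out none of these distance tabulations; the assertion that ``each copy contributes an amount bounded independently of $m$'' with ``leading term $\frac32 m$'' is precisely what has to be proved, edge by edge and copy by copy, and it is where all the difficulty and all the risk of error live --- the paper explicitly notes that \cite{Liu2020} miscounts $|\mathcal{R}\{v_4^i,v_5^i\}|$ in the closely analogous case of $P_3$, so this step cannot be waved through. Two smaller points: you only need the inequality $\ell\geq\frac{3m+2}{2}$, not that some edge attains it with equality (equality would only certify that the bound $n/\ell$ is tight, which is not what is claimed); and the role you assign to the parity of $m$ (uniqueness of the antipodal copy) is a reasonable heuristic for why the answer splits by parity, but it does not substitute for the explicit case analysis, which in the paper's own tables shows up as different index ranges for odd and even $m$.
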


They have also considered the rotationally symmetric graph $\mathcal{G}^m(P_3)$, with $m\geq 4$ even, by indicating that $\mathrm{ldim}_{\mathrm{f}}(\mathcal{G}^m(P_3))\leq \frac {3m}{2m-1}$ (see \cite[Theorem 6]{Liu2020}). Nevertheless, such an upper bound is based on a wrong use of Condition (\ref{eq_Benish2}). More specifically, they indicate that $\left|\mathcal{R}\left\{v_4^i,\, v_5^i\right\}\right|=2m-1$, for every positive integer $i\leq m$ (see \cite[Lemma 6.(a)]{Liu2020}. However, as we notice in the proof of Proposition \ref{P0}, $\left|\mathcal{R}\left\{v_4^i,\, v_5^i\right\}\right|=\frac {3m+4}2$. A comprehensive study of the asymptotic behaviour of the local fractional metric dimension of all the graphs $\mathcal{G}^m(P_i)$, with $m\geq 2$ and $1\leq i\leq 6$, is therefore necessary.

\begin{proposition} \label{P0} Let $m\geq 2$ be a positive integer. Then,
\begin{enumerate}
    \item $\ell(\mathcal{G}^m(P_1))=\begin{cases}\begin{array}{ll}
m+2,& \text{ if } m\in\{2,3\},\\
    8,& \text{ otherwise}.
    \end{array}\end{cases}$

    \item $\ell(\mathcal{G}^m(P_2))=\frac {3m+3}2$.

    \item $\ell(\mathcal{G}^m(P_3))=\begin{cases}\begin{array}{ll}
3,& \text{ if } m=2,\\
5,& \text{ if } m=3,\\
\frac{3m+5}2, & \text{ if } m>3 \text{ is odd},\\
\frac{3m+4}2, & \text{ otherwise}.
    \end{array}\end{cases}$

    \item $\ell(\mathcal{G}^m(P_4))=\ell(\mathcal{G}^m(P_6))=\begin{cases}\begin{array}{ll}
5,& \text{ if } m=3,\\
\frac{3m+3}2, & \text{ otherwise}.
    \end{array}\end{cases}$

    \item $\ell(\mathcal{G}^m(P_5))=\begin{cases}\begin{array}{ll}
3,& \text{ if } m=2,\\
5,& \text{ if } m=3,\\
\frac{3m+3}2, & \text{ if } m>3 \text{ is odd},\\
\frac{3m+2}2, & \text{ otherwise}.
    \end{array}\end{cases}$
\end{enumerate}
\end{proposition}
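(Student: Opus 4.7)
The plan is to exploit the rotational symmetry of each graph $\mathcal{G}^m(P_i)$, which has order $3m$, in order to reduce the computation of $\ell(\mathcal{G}^m(P_i))$ to a finite orbit analysis on a single sector. Under the cyclic group that permutes the $m$ copies $G_1,\ldots,G_m$ of $P_i$, every edge of $\mathcal{G}^m(P_i)$ is equivalent to one of the (at most five) edges of a fixed sector $G_1$ together with the merged rim-edge $v_4^1 v_5^1$. Since $|\mathcal{R}\{v,w\}|$ is an invariant of the orbit of $vw$, the minimum $\ell(\mathcal{G}^m(P_i))=\min_{vw}|\mathcal{R}\{v,w\}|$ is attained on one of these $O(1)$ representatives, so for each representative I only need to determine $|\overline{\mathcal{R}}\{v,w\}|=|\{u:\,d(v,u)=d(w,u)\}|$.

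For a representative edge $vw$ inside $G_1$, I would compute $d(v,v_j^k)$ and $d(w,v_j^k)$ for an arbitrary target vertex $v_j^k\in G_k$ by observing that a shortest path must enter the sector $G_k$ through one of its two rim-edges $v_4^k v_5^k$ or $v_2^k v_3^k=v_4^{k+1}v_5^{k+1}$. Consequently the distance decomposes as a sector-transit cost (essentially $\min(k-1,\,m-k+1)$ times a small constant determined by the internal geometry of $P_i$, read directly from Figure \ref{Figure_PG}) plus a within-sector correction of bounded size. Summing the number of pairs $(j,k)$ for which the two distances coincide then yields a closed-form expression for $|\overline{\mathcal{R}}\{v,w\}|$ that is linear in $m$ with a small periodic correction depending on the parity of $m$. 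Minimising $|\mathcal{R}\{v,w\}|=3m-|\overline{\mathcal{R}}\{v,w\}|$ over the handful of edge representatives then produces the formulas in the statement: for $P_1$ the two chord-edges incident to $v_5$ are the minimisers, giving the constant $8$ as soon as $m\geq 4$; for each of $P_2, P_3, P_4, P_5, P_6$ the minimiser turns out to be the merged rim-edge $v_4^k v_5^k$ (or an equivalent peripheral edge), and the resulting $|\mathcal{R}|$ takes the form $(3m+c)/2$ with a small constant $c$ and an optional parity correction that accounts for the asymmetric chord placement in $P_3$ and $P_5$.

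The main obstacle will be the small-$m$ exceptions $m\in\{2,3\}$ (and occasionally $m=4$), where the cyclic identification collapses several sector-to-sector arcs so that shortest paths no longer conform to the clean transit-plus-correction decomposition above. In these cases I would fall back on a direct hand-computation on the explicit pictures in Figure \ref{Figure_PG5}, enumerating the finitely many edges up to the reduced residual symmetry and tabulating $\mathcal{R}\{v,w\}$ for each. A secondary subtlety is parity tracking: for odd $m$ the graph admits a reflective symmetry that forces balanced contributions from diametrically opposite sectors, whereas for even $m$ the unique antipodal sector $G_{(m+2)/2}$ must be treated separately, and this is precisely what drives the odd/even dichotomy appearing in assertions (3) and (5).
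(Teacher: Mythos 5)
Your approach coincides with the paper's: both reduce, via the rotational symmetry, to finitely many edge-orbit representatives per graph and then minimise $|\mathcal{R}\{v,w\}|=3m-|\overline{\mathcal{R}}\{v,w\}|$ over those representatives, the paper simply recording the resulting sets $\overline{\mathcal{R}}\{v,w\}$ (linear in $m$ with parity corrections, plus the small-$m$ exceptions you anticipate) in Tables \ref{Table0a} and \ref{Table0}. The transit-cost distance decomposition you outline is precisely the natural way to generate those table entries, so in both versions the substantive remaining work is the same deferred case-by-case tabulation.
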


\begin{proof} From the symmetry of the planar graphs under consideration, the result follows readily from the minimum cardinality of some of their resolving neighbourhoods. More specifically, it is enough to focus on those ones indicated in Tables \ref{Table0a} and \ref{Table0}, where, in order to simplify the notation, each vertex $v_a^bv_c^d$ is represented as $abcd$.
\end{proof}

\vspace{0.2cm}

Based on the previous result, the following theorem establishes upper bounds for the local fractional metric dimension of all the rotationally symmetric planar graphs arisen from the pentagonal chorded cycles $P_1$ to $P_6$.

\begin{theorem} \label{P1} Let $m\geq 2$ be a positive integer. Then,
\begin{enumerate}
    \item $\mathrm{ldim}_{\mathrm{f}}(\mathcal{G}^m(P_1))\leq \begin{cases}\begin{array}{ll}
\frac {3m}{m+2},& \text{ if } m\in\{2,3\},\\
\frac {3m}8,& \text{ otherwise}.
    \end{array}\end{cases}$.

    \item $\mathrm{ldim}_{\mathrm{f}}(\mathcal{G}^m(P_2))\leq \begin{cases}
\begin{array}{ll}
\frac {2m}{m+1}, & \text{ if } m \text{ is odd},\\
\frac {6m}{3m+2},  & \text{ otherwise}.
\end{array}
\end{cases}$

 \item $\mathrm{ldim}_{\mathrm{f}}(\mathcal{G}^m(P_3))\leq \begin{cases}
\begin{array}{ll}
2, & \text{ if } m=2,\\
\frac 95, & \text{ if } m=3,\\
\frac {6m}{3m+5}, & \text{ if } m>3 \text{ is odd},\\
\frac {6m}{3m+4},  & \text{ otherwise}.
\end{array}
\end{cases}$.

 \item If $i\in\{4,5,6\}$, then $\mathrm{ldim}_{\mathrm{f}}(\mathcal{G}^m(P_i))\leq \begin{cases}
\begin{array}{ll}
2,& \text{ if } (i,m)=(5,2),\\
\frac 95,&\text{ if } m=3,\\
\frac {2m}{m+1}, & \text{ if } m>3 \text{ is odd},\\
\frac {6m}{3m+2},  & \text{ otherwise}.
\end{array}
\end{cases}$
\end{enumerate}
\end{theorem}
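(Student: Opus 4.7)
The plan is to apply the upper bound $\mathrm{ldim}_{\mathrm{f}}(G)\leq |V(G)|/\ell(G)$ from the inequality chain (\ref{eq_Benish2}) of Lemma \ref{LemmaBenish}. Since each pentagonal chorded cycle $P_i$ has $n=5$, the rotationally symmetric planar graph $\mathcal{G}^m(P_i)$ has order $m(n-2)=3m$, so this bound reads $\mathrm{ldim}_{\mathrm{f}}(\mathcal{G}^m(P_i))\leq 3m/\ell(\mathcal{G}^m(P_i))$ for every $i\in\{1,\ldots,6\}$. All the values of $\ell$ needed are already recorded in Proposition \ref{P0}, so the proof reduces to substituting and simplifying case by case.

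First I would dispose of part (1): plugging $\ell(\mathcal{G}^m(P_1))=m+2$ for $m\in\{2,3\}$ and $\ell(\mathcal{G}^m(P_1))=8$ for $m\geq 4$ immediately yields $3m/(m+2)$ and $3m/8$ respectively. Part (3) and the large-$m$ branches of (4) follow in the same way once one reads off the parity split in Proposition \ref{P0}: for $\mathcal{G}^m(P_3)$ with $m>3$ odd one substitutes $\ell=(3m+5)/2$ to get $6m/(3m+5)$, and for $m>3$ even one substitutes $\ell=(3m+4)/2$ to get $6m/(3m+4)$, while the small cases $m=2,3$ come from the explicit integer values $\ell=3$ and $\ell=5$ listed in Proposition \ref{P0}. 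Parts (4) and the exceptional case $(i,m)=(5,2)$ are analogous.

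The only non-mechanical point is that the single-formula statements in Proposition \ref{P0}, namely $\ell(\mathcal{G}^m(P_2))=(3m+3)/2$ and $\ell(\mathcal{G}^m(P_4))=\ell(\mathcal{G}^m(P_6))=(3m+3)/2$ (for $m\neq 3$), are not integral when $m$ is even; in that case they must be interpreted as $(3m+2)/2$, exactly mirroring the explicit parity split already visible in parts (3) and (5) of the Proposition. After this clarification the substitution yields the two branches $2m/(m+1)$ (odd $m$) and $6m/(3m+2)$ (even $m$) of the theorem for each of $P_2$, $P_4$ and $P_6$, and likewise the branches for $P_5$ using $\ell=(3m+3)/2$ and $\ell=(3m+2)/2$.

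No new combinatorial construction is required beyond Proposition \ref{P0}; the theorem is a purely arithmetic corollary of that proposition and the length-based upper bound in Lemma \ref{LemmaBenish}. The only obstacle is bookkeeping: matching each of the four parts and its case splits to the correct value of $\ell$, and correctly handling the implicit parity interpretation of the single-formula cases in Proposition \ref{P0} so that one does not accidentally claim a non-integral denominator for the resulting rational bound.
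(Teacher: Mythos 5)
Your overall strategy --- order $3m$ divided by $\ell(\mathcal{G}^m(P_i))$ via Condition (\ref{eq_Benish2}), with the $\ell$-values read off from Proposition \ref{P0} --- is exactly what the paper does for $P_1$, $P_3$, $P_5$, and for all odd-$m$ branches, and your arithmetic there checks out. The divergence, and the gap, is in how you handle $P_2$, $P_4$ and $P_6$ when $m$ is even. You correctly notice that Proposition \ref{P0} gives the non-integral value $\ell=\frac{3m+3}{2}$ there, but your fix --- declaring that the formula ``must be interpreted'' as $\frac{3m+2}{2}$ by analogy with the parity splits in parts (3) and (5) --- is a conjecture, not an argument. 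Nothing in Proposition \ref{P0} or its proof (which just points to Tables \ref{Table0a} and \ref{Table0}) licenses that substitution, and you have not computed any resolving neighbourhood to verify it. The paper does not take this route at all: for $P_2$, $P_4$, $P_6$ with $m\geq 4$ even it invokes Proposition \ref{P2P5} (the bound $\frac{6m}{3m+2}$ imported from Liu et al.), and for $m=2$ it solves the linear programming problem directly, obtaining $\mathrm{ldim}_{\mathrm{f}}(\mathcal{G}^2(P_2))=\mathrm{ldim}_{\mathrm{f}}(\mathcal{G}^2(P_4))=\frac 32$ and $\mathrm{ldim}_{\mathrm{f}}(\mathcal{G}^2(P_6))=2$.

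The $m=2$ case shows that your reinterpretation is not just unproven but actually fails in at least one instance: if $\mathrm{ldim}_{\mathrm{f}}(\mathcal{G}^2(P_6))=2$ as the paper computes, then Condition (\ref{eq_Benish2}) forces $\ell(\mathcal{G}^2(P_6))\leq 3$, whereas your patched formula would give $\ell=\frac{3\cdot 2+2}{2}=4$ and the (too strong) bound $\frac 32$. So the even branch of parts (2) and (4) cannot be obtained as ``a purely arithmetic corollary of Proposition \ref{P0}'' as you claim; it genuinely needs either the external Proposition \ref{P2P5} plus the explicit $m=2$ computation, or a fresh determination of the relevant resolving neighbourhoods for even $m$. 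Everything else in your write-up matches the paper's argument.
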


\begin{proof}
For the planar graphs $P_2$, $P_4$ an $P_6$, the case $m$ even follows from Proposition \ref{P2P5} together with the computational resolution of the linear programming problem related to the case $m=2$, which gives rise to the values $\mathrm{ldim}_{\mathrm{f}}(\mathcal{G}^2(P_2))=\mathrm{ldim}_{\mathrm{f}}(\mathcal{G}^2(P_4))=\frac 32$ and $\mathrm{ldim}_{\mathrm{f}}(\mathcal{G}^2(P_6))=2$. The remaining cases follows all of them straightforwardly from Proposition \ref{P0} and Condition (\ref{eq_Benish2}).
\end{proof}

\vspace{0.1cm}

\subsection{The hexagonal case}

Similarly to the pentagonal case, the following preliminary technical result is required.

\begin{proposition}\label{0} Let $m\geq 2$ be a positive integer. Then,
\begin{enumerate}
    \item $\ell(\mathcal{G}^m(H_3))=2m+1$.
    \item If $i\in\{4,10,12,17\}$, then $\ell(\mathcal{G}^m(H_i))=\begin{cases}\begin{array}{ll}
4,& \text{ if } m=2,\\
2m+2,& \text{ otherwise}.
\end{array}
\end{cases}$
    \item If $i\in\{5,11,13\}$, then $\ell(\mathcal{G}^m(H_i))=2m$.

    \vspace{0.1cm}

    \item If $i\in\{6,9,14,16\}$, then $\ell(\mathcal{G}^m(H_i))=4$.

    \item $\ell(\mathcal{G}^m(H_7))=\begin{cases}
\begin{array}{ll}
4m-6,&\text{ if } m\leq 4,\\
2m+3,&\text{ otherwise}.
\end{array}
\end{cases}$

    \item $\ell(\mathcal{G}^m(H_8))=\begin{cases}
\begin{array}{ll}
2m,&\text{ if } m\leq 4,\\
8,&\text{ otherwise}.
\end{array}
\end{cases}$

    \item $\ell(\mathcal{G}^m(H_{15}))=\begin{cases}
\begin{array}{ll}
4m-6,&\text{ if } m\leq 3,\\
2m+2,&\text{ otherwise}.
\end{array}
\end{cases}$
\end{enumerate}
\end{proposition}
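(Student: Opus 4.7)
The plan follows the strategy used in Proposition~\ref{P0}. Because every planar graph $\mathcal{G}^m(H_i)$ admits by construction a rotational symmetry of order $m$, the edge set splits into finitely many orbits under this symmetry, with one representative per edge of a fixed copy $H_i^1$ together with the edge $v_5^1v_6^1$ merged from the preceding coalescence. Hence $\ell(\mathcal{G}^m(H_i))$ is obtained by evaluating $|\mathcal{R}\{v,w\}|$ on at most a handful of representative edges and taking the minimum.

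First, for each of the fifteen graphs appearing in the statement, I would list the representative edges and determine the corresponding complementary local resolving neighborhoods $\overline{\mathcal{R}}\{v,w\} = V(\mathcal{G}^m(H_i)) \setminus \mathcal{R}\{v,w\}$, that is, the set of vertices equidistant from $v$ and $w$. These data would be collected in auxiliary tables, parallel to the Tables~\ref{Table0a} and~\ref{Table0} used in the pentagonal case.

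Second, I would compute these equidistant sets from the shortest-path structure of $\mathcal{G}^m(H_i)$. The distance between two vertices in distinct copies $H_i^j$ and $H_i^{j'}$ decomposes as the inter-copy separation along the ring plus an intra-copy correction bounded by the diameter of $H_i$ (at most three). Consequently, each $|\mathcal{R}\{v,w\}|$ is an affine function of $m$ whose slope counts how many copies contribute equidistant vertices and whose constant term reflects the local chord structure of $H_i$. Taking the pointwise minimum over the representative edges then yields the formulas in the statement.

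The small-$m$ exceptions ($m=2$ for part (2), $m\leq 3$ for part (7), and $m\leq 4$ for parts (5) and (6)) arise precisely when the ring is short enough that the "antipodal" copies collapse onto nearby ones and previously distinct equidistance classes merge; these are verified by direct inspection of $\mathcal{G}^m(H_i)$. The main obstacle will be the purely combinatorial bookkeeping: there are fifteen graphs, each contributing several edge orbits, and for every orbit one must track which vertices in each copy satisfy the equidistance condition. Once this tabulation is in place, however, the minima and their small-$m$ corrections can be read off routinely.
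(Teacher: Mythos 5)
Your plan coincides with the paper's own argument: the proof there likewise invokes the rotational symmetry to restrict attention to one representative edge per orbit, tabulates the complementary sets $\overline{\mathcal{R}}\{v,w\}$ for those representatives (in Tables~\ref{Table6a}--\ref{Table6d}, with cardinalities that are affine in $m$ and with the small-$m$ degeneracies listed as separate rows), and reads off $\ell(\mathcal{G}^m(H_i))$ as the minimum, handling $H_6$, $H_9$, $H_{14}$ and $H_{16}$ by exhibiting the single edge attaining the value $4$ and checking that all other edges give at least $4$. The approach is the same; the substance in both cases lies in the deferred tabulation.
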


\begin{proof} Again, from the symmetry of the planar graphs under consideration, the result follows readily from the resolving neighbourhoods indicated in Tables \ref{Table6a}--\ref{Table6d}, together with the following consideration. (Again, each vertex $v_a^bv_c^d$ is represented as $abcd$ in the mentioned tables.) Under the following assumptions, it is verified that $|\mathcal{R}\{v,w\}|\geq 4$ for all $vw\in \left(E(\mathcal{G}^m(H_6))\setminus\{v_4^1v_6^1\}\right) \cup \left(E(\mathcal{G}^m(H_9))\setminus\{v_5^1v_6^1\}\right)\cup \left(E(\mathcal{G}^m(H_{14}))\setminus\{v_4^1v_6^1\}\right)\cup\left(E(\mathcal{G}^m(H_{16}))\setminus\{v_1^1v_4^1\}\right)$.
\end{proof}

\vspace{0.2cm}

Based on the previous result, the following theorem establishes upper bounds for the local fractional metric dimension of all the rotationally symmetric planar graphs arisen from the hexagonal chorded cycles $H_1$ to $H_6$.

\begin{theorem}\label{1} Let $m\geq 2$ be a positive integer. Then,
\begin{enumerate}
    \item $\mathrm{ldim}_{\mathrm{f}}(\mathcal{G}^m(H_1))=\mathrm{ldim}_{\mathrm{f}}(\mathcal{G}^m(H_2))=1$.

    \vspace{0.1cm}

    \item $\mathrm{ldim}_{\mathrm{f}}(\mathcal{G}^m(H_3))\leq \frac{4m}{2m+1}$.

    \vspace{0.1cm}

    \item If $i\in\{4,10,12,17\}$, then $\mathrm{ldim}_{\mathrm{f}}(\mathcal{G}^m(H_4))\leq\begin{cases}
\begin{array}{ll}
2,&\text{ if } m=2,\\
\frac{2m}{m+1},&\text{ otherwise}.
\end{array}
\end{cases}$

    \vspace{0.1cm}

    \item  If $i\in\{5,11,13\}$, then $\mathrm{ldim}_{\mathrm{f}}(\mathcal{G}^m(H_i))\leq 2$.

    \vspace{0.2cm}

    \item If $i\in\{6,9,14,16\}$, then $\mathrm{ldim}_{\mathrm{f}}(\mathcal{G}^m(H_i))\leq m$.

    \vspace{0.1cm}

    \item $\mathrm{ldim}_{\mathrm{f}}(\mathcal{G}^m(H_7))\leq \begin{cases}
\begin{array}{ll}
\frac{2m}{2m-3}, & \text{ if } m\leq 4,\\
\frac{4m}{2m+3}, & \text{ otherwise}.
\end{array}
\end{cases}$

    \vspace{0.1cm}

    \item $\mathrm{ldim}_{\mathrm{f}}(\mathcal{G}^m(H_8))\leq \begin{cases}\begin{array}{ll}
2,&\text{ if } m\leq 4,\\
\frac m2,&\text{ otherwise}.
\end{array}
\end{cases}$

    \vspace{0.1cm}

    \item $\mathrm{ldim}_{\mathrm{f}}(\mathcal{G}^m(H_{15}))\leq \begin{cases}\begin{array}{ll}
\frac {2m}{2m-3},&\text{ if } m\leq 3,\\
\frac{2m}{m+1},&\text{ otherwise}.
\end{array}
\end{cases}$
\end{enumerate}
\end{theorem}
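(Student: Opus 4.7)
The plan is to deduce every assertion from Lemma \ref{LemmaBenish} together with the values of $\ell$ supplied by Proposition \ref{0}. Since each hexagonal chorded cycle has order $6$, the graph $\mathcal{G}^m(H_i)$ has order $m(6-2)=4m$, so the right-hand inequality in (\ref{eq_Benish2}) gives
\[\mathrm{ldim}_{\mathrm{f}}(\mathcal{G}^m(H_i))\leq \frac{4m}{\ell(\mathcal{G}^m(H_i))}.\]
A direct substitution of the values collected in Proposition \ref{0} then produces every bound of items (2)--(8) after elementary simplification. For instance, in item (3) the value $\ell=2m+2$ (for $m>2$) yields $4m/(2m+2)=2m/(m+1)$, whereas the value $\ell=4$ at $m=2$ yields the constant $2$. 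The remaining items follow analogously, including the piecewise bounds for $H_7$, $H_8$, and $H_{15}$, whose $\ell$-values change above the thresholds $m=4$, $m=4$, and $m=3$, respectively.

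For item (1), I would argue that $\mathcal{G}^m(H_i)$ is bipartite for every $m\geq 2$ and $i\in\{1,2\}$, and then invoke the first characterization in Lemma \ref{LemmaBenish}. As already remarked following Table \ref{Table1}, $H_1$ and $H_2$ are two different vertex-labelings of the same bipartite hexagonal chorded cycle: a $2$-coloring is obtained by alternating colors around the hexagon, and in each of these two cases the chord joins vertices of opposite colors. In the sequential coalescence defining $\mathcal{G}^m(H_i)$, the identified edges $v_2^kv_3^k$ and $v_{n-1}^{k+1}v_n^{k+1}$ have endpoints of opposite colors, so the $2$-colorings of consecutive copies can be aligned to yield a consistent $2$-coloring of the whole coalescence without creating any odd cycle. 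Hence $\mathcal{G}^m(H_i)$ is bipartite, and the first part of Lemma \ref{LemmaBenish} gives $\mathrm{ldim}_{\mathrm{f}}(\mathcal{G}^m(H_i))=1$.

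The principal obstacle in the overall argument lies not in this theorem but in the preceding Proposition \ref{0}, whose case analysis across the seventeen hexagonal chorded cycles requires explicitly exhibiting the minimum local resolving neighbourhoods (recorded in Tables \ref{Table6a}--\ref{Table6d}). Once those are available, items (2)--(8) reduce to a transcription from (\ref{eq_Benish2}), and item (1) reduces to the short bipartiteness check above. A minor care point is the consistent treatment of small-$m$ cases (e.g. $m=2$ in item (3), or $m\leq 4$ in items (6) and (7)): these are precisely the values at which the minimum resolving neighbourhood in Proposition \ref{0} changes formula, so the piecewise structure of the bounds already accommodates them and no separate linear programming computation is needed beyond what has already been performed in Proposition \ref{0}.
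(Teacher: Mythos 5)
Your treatment of items (2)--(8) coincides with the paper's own proof: the graph $\mathcal{G}^m(H_i)$ has order $4m$, each stated bound is exactly the quotient $4m/\ell(\mathcal{G}^m(H_i))$ supplied by the right-hand inequality of Condition (\ref{eq_Benish2}) together with Proposition \ref{0}, and your simplifications (including the piecewise thresholds for $H_7$, $H_8$, $H_{15}$ and the small-$m$ cases) are all correct. You are also right that the substantive work is entirely in Proposition \ref{0}.

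The one place where you add detail beyond the paper --- the bipartiteness argument for item (1) --- is also the one place with a genuine gap. Aligning the $2$-colorings of consecutive copies works without obstruction along the open chain $\mathcal{G}_1(G),\ldots,\mathcal{G}_{m-1}(G)$, but the final coalescence closes the chain into a ring, and there the already-fixed colorings of copies $m$ and $1$ must agree on the identified edge; this is a parity condition, not something that holds automatically "without creating any odd cycle" as you assert. Concretely, under the literal reading of the construction (where $v_2^k$ is identified with $v_5^{k+1}$ and $v_3^k$ with $v_6^{k+1}$), the vertices $v_2^k,v_3^k,v_4^k$ for $1\le k\le m$ together with the edges $v_2^kv_3^k$, $v_3^kv_4^k$ and $v_4^kv_5^k=v_4^kv_2^{k-1}$ form a chord-free cycle of length $3m$, so for odd $m$ every $\mathcal{G}^m(H_i)$ --- including $i\in\{1,2\}$ --- would contain an odd cycle and could not be bipartite. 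Whether item (1) holds for odd $m$ therefore hinges on the orientation convention of the edge identifications (with the opposite pairing $v_2^k\equiv v_6^{k+1}$, $v_3^k\equiv v_5^{k+1}$, every cycle winding around the ring has even length and bipartiteness does go through). The paper's proof merely asserts bipartiteness, so this is a gap you share with it rather than one you introduced; but since you chose to spell the argument out, the closing-parity check is precisely the step that must be made explicit, and your proof as written does not supply it.
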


\begin{proof} For the planar graphs $H_1$ and $H_2$, the result follows from Lemma \ref{LemmaBenish}, because both graphs $\mathcal{G}^m(H_1)$ and $\mathcal{G}^m(H_2)$ are bipartite. The remaining cases follows all of them straightforwardly from Proposition \ref{0} and Condition (\ref{eq_Benish2}).
\end{proof}

\section{Conclusion and further work}

In this paper, we have described a new family $\mathcal{G}^m(G)$ of rotationally symmetric planar graphs arisen from an edge coalescence of $m$ disjoint copies of a given planar chorded cycle $G$ of order $n$. For $n\leq 6$, we have obtained either the exact value or an upper bound of the local fractional metric dimension $\mathrm{ldim}_{\mathrm{f}}(\mathcal{G}^m(G))$, whatever the planar chorded cycle $G$ is. The obtained results are summarized in Table \ref{t1} just after the bibliography. Where possible, we indicate the asymptotic behaviour of the corresponding local fractional dimension. Particularly, our results generalize those ones obtained by Liu et al. \cite{Liu2020}, by considering the odd case for $n=5$, together with the case $m=2$.

Further work is still required in order to determine not only the asymptotic behaviour of the remaining cases, but also to deal with higher orders. In this regard, and according to the lower bound described in Condition (\ref{eq_Benish2}), the study of the local metric dimension of the rotationally symmetric planar graphs $\mathcal{G}^m(G)$ may play a relevant role. Similarly, the establishment of new general lower bounds concerning the local fractional metric dimension of any graph also constitutes a primordial aspect to be considered as further work. Finally, to delve into the study of structural properties of the rotationally symmetric planar graphs $\mathcal{G}^m(G)$ may also be of usefulness. In any case, let us remark that the local fractional dimension problem is still in a very initial stage, and much more work must be done even for establishing either exact values or lower/upper bounds for the most commonly used types of graphs. Lemma \ref{LemmaW} contributes in this regard by establishing the local fractional metric dimension of a wheel graph, which constitutes indeed a correction to the unproven Theorem 3 in \cite{Aysiah2020}.

\bibliographystyle{acm}
\bibliography{References.bib}

\section*{Appendix}

\appendix

\section{Tables}\label{app}

\begin{table}[ht]
\centering
\begin{tabular}{c|c}
G & $\mathrm{ldim}_{\mathrm{f}}(G)$ \\ \hline
$Q_i$ ($1\leq i\leq 2$) & $3/2$\\
$P_i$ ($1\leq i\leq 6$) & $3/2$\\
$H_i$  ($1\leq i\leq 2$) & $1$\\
$H_i$  ($3\leq i\leq 12$) & $3/2$\\
$H_i$  ($13\leq i\leq 17$) & $5/3$
    \end{tabular}
        \vspace{0.1cm}
    \caption{Local fractional dimension of planar chorded cycles of order $n\leq 6$.}
    \label{Table1}
\end{table}

\renewcommand{\tabcolsep}{2pt}
\begin{table}[ht]
 \centering
\begin{tabular}{llll}
     $n$ & $abcd$   &  $\overline{\mathcal{R}}\{v_a^b,v_c^d\}$ & Constraint\\ \hline
$1$ & $1142$  & \multirow{ 2}{*}{$\left\{12,\, 52,\, 1m,\, 5m\right\}$} & For all $m$\\
     & $4151$ & & For all $m$\\
     & $1151$ & $\left\{42\right\}$ & $m=3$\\
     & & $\left\{1\frac{m+1}2,\, 4\frac {m+3}2,\, 5\frac{m+3}2\right\}$ & $m>3$ odd\\
     & & $\emptyset$ & Otherwise\\
     & $1152$ & $\left\{4i\colon\,1\leq i\leq m\right\}$ & $m\in\{2,3\}$\\
     & & $ V(\mathcal{G}^m(P_1))\setminus\left\{1i,\,5i\colon\, i\in\{1,2,3,m\}\,\right\}$ & Otherwise\\
     & $4142$ & $\left\{1\frac{m+1}2,\, 4\frac{m+3}2,\, 5\frac{m+3}2\right\}$ & $m$ odd\\
     & & $\emptyset$ & Otherwise\\
$2$ & $1151$ & $V(\mathcal{G}^m(P_2))\setminus\left\{11,\, 41,\,51,\, 1i,\, 4i,\, 5i\colon\, \frac{m+1}2<i\leq m\right\}$ & For all $m$\\
     & $4142$ & \multirow{ 2}{*}{$\left\{11,\,4\frac {m+3}2,\, 5\frac {m+3}2\right\}$} & For all $m$\\
     & $5152$ & &  For all $m$ \\
     & $4151$ & $\emptyset$ &  For all $m$\\
    \end{tabular}

    \vspace{0.1cm}
 \caption{Resolving neighbourhoods of $\mathcal{G}^m(P_n)$, for $n\in\{1,2\}$.}\label{Table0a}
    \end{table}

\renewcommand{\tabcolsep}{1pt}
\begin{table}[ht]
 \centering
\begin{tabular}{llll}
     $n$ & $abcd$   &  $\overline{\mathcal{R}}\{v_a^b,v_c^d\}$ & Constraint\\ \hline
$3$ &  $1151$   & $\left\{42\right\}$ & $m\in\{2,3\}$\\
    &  & $\left\{42,\, 1i,\,4i,\, 5i\colon\, 3\leq i\leq \frac{m+1}2\right\}$ & $m>3$ odd\\
     & & $\left\{42,\,1i,\,4j,\, 5j\colon\, 3\leq i\leq \frac m2,\, 3\leq j\leq \frac {m+3}2\right\}$ & Otherwise\\
   & $1152$ &   $\emptyset$ & $m=2$\\
   & & $\left\{43\right\}$ & $m=3$\\
  &  & $\left\{4\frac {m+3}2,\,1i,\,4(i+1),\, 5(i+1)\colon\, \frac{m+3}2\leq i<m\right\}$ & $m>3$ odd\\
   & & $\left\{5(\frac m2+2)\right\}$ & Otherwise\\
   & $4142$ & $\left\{11,\, 12,\,52\right\}$ & $m=2$\\
   & & $\left\{11,\, 12,\,43,\,52\right\}$ & $m=3$\\
   & & $ \left\{11,\,12,\,1\frac{m+3}2,\,4\frac{m+3}2,\,52\right\}$ & $m>3$ odd\\
   & & $ \left\{11,\,12,\,4(\frac m2 +2),\,52\right\}$ & Otherwise\\
   & $4151$ & $\emptyset$ & $m=2$\\
   & & $\left\{42,\,52\right\}$ & $m=3$\\
   & & $\left\{4m,\, 5m,\, 1i,\,4i,\,5i\colon\, \frac{m+3}2\leq i<m\right\}$ & $m>3$ odd\\
    & & $\left\{4\frac m2,\,1i,\,4(i+1),\,5(i+1)\colon\, \frac m2<i<m\right\}$ & Otherwise\\
    & $4152$ & $\left\{42,\,53\right\}$ & $m=3$\\
    & & $\left\{1i,\, 4j,\,5(j+1)\colon\, 3\leq i\leq \frac{m+1}2,\, 2\leq j\leq \frac {m+1}2\right\}$ & $m>3$ odd\\
    & & $\left\{42,\,1i,\, 4i,\, 5i\colon\,3\leq i\leq \frac {m+3}2\right\}$ & Otherwise\\
$4$ & $1141$ & $\left\{51,\,1i,\, 5j,\, 4i\colon\, 2\leq i\leq \frac{m+1}2,\,3\leq j\leq \frac {m+1}2\right\}$ & For all $m$\\
    & $1151$ &  $\left\{41,\,1i,\, 4j,\, 5i\colon\, \frac{m+3}2\leq i<m,\, \frac {m+3}2\leq j\leq m\right\}$ & For all $m$\\
    & $4142$ & $\left\{4\frac{m+3}2,\,5\frac{m+3}2\right\}$ & For all $m$\\
    & $4151$ & $\left\{11,\,1m,\, 52,\,5m\right\}$ & For all $m$\\
$5$ & $1141$ & $\left\{12,\,1m,\,51,\,52\right\}$ & For all $m$\\
    & $1151$ & $\left\{41\right\}$ & $m=2$\\
    & & $\left\{1i,\,41,\,42,\, 4i,\,5i\colon\, 3\leq i\leq \frac {m+1}2\right\}$ & $m$ odd\\
    & & $\left\{1i,\,41,\,42,\, 4i,\,5(\frac {m+3}2),\,5i\colon\, 3\leq i\leq \frac m2\right\}$ & Otherwise\\
    & $1152$ & $\begin{array}{l}\left\{41,\,1i,\,4j,\,5k\colon\,\frac{m+3}2\leq i<m,\,\frac{m+3}2\leq j\leq m\,\frac{m+3}2< k\leq m\right\}\end{array}$ & $m$ odd\\
    & & $\left\{41,\,1i,\,4j,\, 5j\colon\,\frac m2+2\leq i<m,\,\frac m2+2\leq j\leq m\right\}$ & $m$ even\\
    & $4142$ & $\left\{51\right\}$ & $m=2$\\
    & & $\left\{1\frac {m+3}2,\,4\frac{m+3}2,\, 51\right\}$ & $m$ odd\\
    & & $\left\{51,\, 5(\frac m2+2\right\}$ & Otherwise\\
    & $4151$ & $\begin{array}{l}V(\mathcal{G}^m(P_5))\setminus \left\{11,\,1i,\,4j,\,5j\colon\,\frac{m+3}2\leq i<m,\,\frac{m+3}2\leq j\leq m\right\}\end{array}$ & $m$ odd\\
    & & $\begin{array}{l}V(\mathcal{G}^m(P_5))\setminus \left\{11,\,1i,\,4(\frac {m+3}2),\,4j,\, 5j\colon\,\frac m2<i<m,\,\frac {m+3}2<j\leq m\right\}\end{array}$ & Otherwise\\
    & $4152$ & $\left\{11,\,42,\,1i,\,4i,\,5i,\,5\frac {m+3}2\colon\,3\leq i\leq \frac{m+1}2\right\}$ & $m$ odd\\
    & & $\left\{11,\,42,\,1i,\,4i,\,5i\colon\,3\leq i\leq \frac {m+3}2\right\}$ & Otherwise\\
$6$ & $1151$ & $\left\{1i,\, 4i,\, 5i\colon\, 2\leq i\leq \frac{m+1}2\right\}$ & For all $m$\\
    & $1152$ & $\left\{1i,\, 4i,\,51,\, 5j\colon\, \frac{m+3}2\leq i\leq m,\,\frac{m+3}2<j\leq m\right\}$ & For all $m$\\
    & $4142$ & $\left\{11,\, 12,\, 43,\, 52\right\}$ & $m=3$\\
    & & $\left\{11,\,12,\,1\frac{m+3}2,\, 4\frac{m+3}2,\, 52\right\}$ & Otherwise\\
    & $4151$ & $\left\{1i,\, 4j,\, 5i\colon\, 2\leq i\leq\frac{m+1}2,\, \frac{m+3}2\leq j\leq m\right\}$ & For all $m$\\
    & $4152$ & $\left\{1i,\, 4j,\,51,\, 5k\colon\, \frac{m+3}2\leq i\leq m,\, \frac{m+3}2<j\leq m,\, 2\leq k\leq \frac{m+1}2 \right\}$ & For all $m$\\
    & $5152$ & $\left\{11,\, 41,\, 5{\frac{m+3}2}\right\}$ & For all $m$
    \end{tabular}
    \vspace{0.1cm}
 \caption{Resolving neighbourhoods of $\mathcal{G}^m(P_n)$, for $3\leq n\leq 6$.}\label{Table0}
    \end{table}

\begin{table}[ht]
 \centering
\begin{tabular}{llll}
     $n$ & $abcd$   &  $\overline{\mathcal{R}}\{v_a^b,v_c^d\}$ & Constraint\\ \hline
$3$ & $1151$ & $\left\{61,\, 1i,\,4j,\,5j,\,6i\colon\, 2\leq j\leq \frac {m+1}2<i\leq m\right\}$ & $m$ odd\\
     & & $\left\{61,\, 1i,4j,\,5k,\,\,6l\colon\, 2\leq j\leq \frac m2<i\leq m,\,2\leq k\leq \frac {m+3}2<l\leq m \right\}$ & Otherwise\\
    & $1161$ & $\left\{41,\,4m,\,51\right\}$ & For all $m$\\
    & $1162$ & $\left\{41,5{\frac {m+1}2}\right\}$ & $m$ odd\\
    & & $\left\{41,4({\frac m2 +1})\right\}$ & Otherwise\\
    & $4151$ & $\{62\}$ & $m$ odd\\
    & & $\left\{1({\frac {m+3}2}),\, 62\right\}$ & Otherwise\\
    & $4152$ & $\left\{62,6{\frac {m+3}2}\right\}$ & $m$ odd\\
    & & $ \left\{62,\,1({\frac {m+3}2})\right\}$ & Otherwise\\
    & $5161$ & $\left\{1i,\,4j,\,5k,\,6l\colon\, 1\leq i\leq \frac {m+1}2\leq j<m,\, 2\leq l\leq \frac {m+1}2<k\leq m\right\}$ & $m$ odd\\
    & & $\left\{1i,\,4j,\,5k,\, 6l\colon\, 2<i\leq \frac m2<j<m,\, 2\leq l\leq \frac {m+3}2,\,\frac m2<k\leq m\right\}$ & Otherwise\\
    $4$ & $1161$ & $\left\{4i,\,5i\colon\,2\leq i\leq \frac{m+1}2\right\}$ & $m$ odd\\
    & & $\left\{4i,\,5j\colon\,2\leq i\leq \frac m2,\, 2\leq j\leq \frac {m+3}2\right\}$ & Otherwise\\
    & $4151$ & $\left\{ij\colon\, i\in\{1,4,5,6\},\, 2\leq j\leq \frac {m+1}2\right\}$ & $m$ odd\\
    & & $\left\{1i,\,4i,\,5j,\,6j\colon\, 2\leq i\leq \frac m2,\, 2\leq j\leq \frac {m+3}2\right\}$ & Otherwise\\
    & $5152$ & $\left\{11,\,41,5{\frac{m+1}2},\,6{\frac{m+1}2}\right\}$ & $m$ odd\\
    & & $\left\{11,\,41,1({\frac m2 + 1}),\,4({\frac {m+3}2})\right\}$ & Otherwise\\
    & $5161$ & $\{62,\,6m\}$ & $m=2$\\
    & & $\{12,\,1(m-1),\,62,\,6m\}$ & Otherwise\\
$5$ & $1141$ & $\left\{51,\, 1i,\, 4j,\,5j,\,6k\colon\, 2\leq i\leq \frac {m+1}2<j\leq m,\,  2<k\leq \frac {m+3}2\right\}$ & $m$ odd\\
    & & $\left\{51,\,1i,\,4j,\,5k,\,6l\colon\, 2\leq i\leq \frac {m+3}2,\, \frac m2<j\leq m,\, 2<l\leq \frac {m+3}2<k\leq m\right\}$ & Otherwise\\
 & $1151$ & $\left\{61,\,1i,\,4j,\,5k,\, 6i\colon\, 1\leq j\leq \frac {m+1}2<i\leq m,\,2\leq k\leq \frac {m+1}2\right\}$ & $m$ odd\\
& & $\left\{61,\, 1i,\,4j,\,5k,\, 6l\colon\, 2\leq k\leq \frac {m+3}2\leq i\leq m,\,1\leq j\leq \frac m2,\,  \frac {m+3}2<l\leq m\right\}$ & Otherwise\\
& $1161$ & $\left\{4m,\, 51\right\}$ & $m$ odd\\
& & $\left\{4m,\, 51,\, 5({\frac {m+3}2})\right\}$ & Otherwise\\
& $1162$ &  $\left\{5{\frac {m+3}2}\right\}$ & $m$ odd\\
& & $\left\{4({\frac {m+3}2})\right\}$ & Otherwise\\
& $4151$ & $\left\{11,\, 62,\, 6{\frac {m+3} 2}\right\}$ & $m$ odd\\
& & $\left\{11,\, 62, \, 1({\frac {m+3}2})\right\}$ & Otherwise\\
& $4152$ & $\left\{1{\frac {m+3}2}\right\}$ & $m$ odd\\
& & $\left\{6({\frac m2+2})\right\}$ & Otherwise\\
& $5161$ & $\left\{1i,\, 4j,\, 5k,\,6l,\colon\, 1\leq i\leq \frac {m+1}2< k\leq m ,\,  2\leq l\leq \frac {m+1}2\leq j<m\right\}$ & $m$ odd\\
& & $\left\{1i,\,4j,\,5k,\,6l\colon\, 1\leq i\leq \frac m2<k\leq m,\,  2\leq l\leq \frac {m+3}2\leq j<m\right\}$ & Otherwise\\
$6$ & $4161$ & $V(\mathcal{G}^m(H_6))\setminus \left\{11,\,1m,\,41,\,61\right\}$ & For all $m$\\
\end{tabular}
\vspace{0.1cm}
 \caption{Resolving neighbourhoods of $\mathcal{G}^m(H_n)$, for $3\leq n\leq 6$.}\label{Table6a}
\end{table}

\renewcommand{\tabcolsep}{1pt}
\begin{table}[ht]
 \centering
\begin{tabular}{llll}
     $n$ & $abcd$   &  $\overline{\mathcal{R}}\{v_a^b,v_c^d\}$ & Constraint\\ \hline
    $7$ & $1161$ & $\begin{array}{l}
\left\{1i,\, 6i,\, 4j,\,5k\colon\, 2\leq i\leq \frac {m+1}2,\, 2< j\leq \frac {m+1}2+1,\, 2< k\leq \frac {m+1}2\right\}\end{array}$ & $m$ odd\\
& & $\begin{array}{ll}
\left\{1i,\, 6j,\,4k,\, 5k\colon\, 2\leq i\leq \frac m2,\, 2\leq j\leq \frac {m+3}2,\, 2<k\leq \frac {m+3}2\right\}\end{array}$ & Otherwise\\
& $1162$ & $\left\{51,\,61,\, 1i,\,4j,\,5j,\,6j\colon\, \frac {m+1}2< i\leq m,\,\frac {m+3}2<j\leq m\right\}$ & $m$ odd\\
& & $\left\{51,\,61,\,1i,\, 4i, 6i,\, 5j\colon\, \frac {m+3}2<i\leq m,\, \frac m2+2<i\leq m\right\}$ & Otherwise\\
& $4151$ & $\left\{1i,\,6j,\,4k,\,5k\colon\, 2\leq i< \frac {m+1}2,\,2\leq j\leq \frac {m+1}2,\, 2<k\leq \frac {m+1}2\right\}$ & $m$ odd\\
& & $\left\{12,\,53,\,62\right\}$ & $m=4$\\
& & $\left\{1i,\,4j,5j, 6i\colon\, 2\leq i\leq \frac m2,\, 2<j\leq \frac m2\right\}$ & Otherwise\\
& $4152$ & $\left\{1i,4i,5j,6k\colon\, \frac {m+1}2< i<m,\, \frac {m+3}2<j\leq m,\, \frac {m+1}2<k\leq m\right\}$ & $m$ odd\\
& & $\left\{1i,\,4j,5k,\,6k\colon\, \frac {m+3}2\leq i<m,\, \frac {m+3}2<j< m,\, \frac {m+3}2<k\leq m\right\}$ & Otherwise\\
& $5161$ & $\left\{1i,\,5j,\,6j,\, 4k\colon\, \frac {m+1}2\leq i<m,\, \frac {m+1}2< j\leq m,\,\frac {m+1}2<k<m\right\}$ & $m$ odd\\
& & $\left\{1i,\,4i,\,6j,\, 5k\colon\, \frac {m+3}2\leq i<m,\, \frac {m+3}2\leq j\leq m,\, \frac {m+3}2<k\leq m\right\}$ & Otherwise\\
& $5261$ & $\left\{1i,\,6i,\,4j,\,5k\colon\, 2\leq i\leq \frac {m+1}2,\, 2<j\leq \frac{m+1}2 ,\, 2<k\leq\frac {m+3}2\right\}$ & $m$ odd\\
& & $\left\{1i,6j,4k,5k\colon\, 2\leq i\leq \frac m2,\, 2\leq j\leq \frac {m+3}2,\,2<k\leq \frac {m+3}2\right\}$ & Otherwise\\
& $6162$ & $\left\{11,\,41,\,42,\,4{\frac {m+3}2},\,52,\,6{\frac {m+3}2}\right\}$ & $m$ odd\\
& & $\left\{11,1({\frac {m+3}2}),41,42,52,5({\frac m2+2})\right\}$ & Otherwise\\
$8$ & $1161$ & $\left\{1i,\,4i,\,5i,\, 6i\colon\, 2\leq i\leq \frac {m+1}2\right\}$ & $m$ odd\\
& & $\left\{1i,\, 4j,\, 5j,\,6j\colon\,2\leq i\leq \frac m2,\, 2\leq j\leq \frac {m+3}2\right\}$ & Otherwise\\
& $1162$ & $\begin{array}{ll}
\left\{41,\,51,\,61,\,1i,\,4j \colon\, \frac {m+1}2<i\leq m,\,\frac {m+3}2<j\leq m\right\}\cup\\
\cup\left\{5i,\,6i\colon\, \frac {m+1}2<i<m\right\}\end{array}$ & $m$ odd\\
& & $\left\{41,\,51,\,61,\,1i,\,4i,\,5i,\,6i\colon\, \frac {m+3}2<i\leq m\right\}$ & Otherwise\\
& $4151$ & $V(\mathcal{G}^m(H_8))\setminus\left\{41,\,42,\,51,\,52\right\}$ & $m=2$\\
&  & $V(\mathcal{G}^m(H_8))\setminus\left\{41,\,42,\,4(m-1),\,4m,\,51,\,52,\,53,\,5m\right\}$ & Otherwise\\
& $4152$ & $\left\{11,\,4{\frac {m+1}2}\right\}$ & $m$ odd\\
& & $\left\{11,\,1{\frac m2}\right\}$ & Otherwise\\
& $4161$ & $\left\{42,\,4m,\,51,\,53\right\}$ & For all $m$\\
& $5161$ & $\left\{41,\,4(m-1),\,52,\,5m\right\}$ & For all $m$\\
& $6162$ & $\left\{11,\,6{\frac {m+3}2}\right\}$ & $m$ odd\\
& & $\left\{11,\,1{\frac {m+3}2}\right\}$ & Otherwise\\
$9$ & $5161$ & $V(\mathcal{G}^m(H_9))\setminus\left\{41,\,4m,\,51,\,61\right\}$ & For all $m$\\
$10$ & $1161$ & $\left\{1i,\,4i,\,5i,\,6i\colon\, 2\leq i\leq \frac {m+1}2\right\}$ & $m$ odd\\
& & $\left\{1i,\,4i,\,5j,\,6j\colon\, 2\leq i\leq \frac m2,\, 2\leq j\leq \frac {m+3}2\right\}$ & Otherwise\\
& $5161$ & $\emptyset$ & For all $m$\\
& $6162$ & $\left\{11,\,41,\,5{\frac {m+3}2},6{\frac {m+3}2}\right\}$ & $m$ odd\\
& & $\left\{11,\,1({\frac {m+3}2}),\,41,\,4({\frac {m+3}2})\right\}$ & Otherwise\\
$11$ & $1161$ & $\left\{5{\frac {m+3}2}\right\}$ & $m$ odd\\
& & $\left\{4({\frac {m+3}2)}\right\}$ & Otherwise\\
& $4151$ & $\left\{11,\, 1{\frac {m+1}2},\, 61\right\}$ & $m$ odd\\
& & $\left\{11,\, 61,\, 6{\frac m2}\right\}$ & Otherwise\\
& $4161$ & $\left\{51,\,1i,\, 6i,\, 4j,\, 5j\colon\, 2\leq i\leq {\frac {m+1}2}<j\leq m\right\}$ & $m$ odd\\
& & $\left\{51,\, 1i,\, 6j,\, 4k,\, 5l\colon\, 2\leq i\leq \frac m2<k\leq m,\, 2\leq j\leq \frac {m+3}2<l\leq m\right\}$ & Otherwise\\
& $5161$ & $\left\{1i,\, 6j,\, 4k,\, 5l \colon\, 1\leq k\leq \frac {m+1}2\leq i\leq m,\, 2\leq l\leq \frac {m+1}2<i\leq m\right\}$ & $m$ odd\\
& & $\begin{array}{ll}
\left\{1i,\, 6i,\, 4j,\,5k\colon\, \frac m2<i\leq m,\, 1\leq j\leq \frac m2,\, 2\leq k\leq \frac {m+3}2\right\}
\end{array}$ & Otherwise\\
\end{tabular}
\vspace{0.1cm}
 \caption{Resolving neighbourhoods of $\mathcal{G}^m(H_n)$, for $7\leq n\leq 11$.}\label{Table6b}
\end{table}

\begin{table}[ht]
 \centering
\begin{tabular}{llll}
     $n$ & $abcd$   &  $\overline{\mathcal{R}}\{v_a^b,v_c^d\}$ & Constraint\\ \hline
    $12$ & $1151$ & $\left\{61,\, 1i,\,4i,\, 5j,\, 6k\colon\, 2\leq i\leq {\frac {m+1}2},\, 2\leq j\leq {\frac {m+1}2},\,2< k\leq {\frac {m+1}2}\right\}$ & $m$ odd\\
& & $\left\{61,\,1i,\,4i,\,5j,\, 6k\colon\, 2\leq i\leq \frac m2,\, 2\leq j\leq \frac {m+3}2,\,2< k\leq \frac {m+3}2\right\}$ & Otherwise\\
& $1161$ & $\left\{41,\, 51,\, 1i,\,4j,\,5j,\,6k\colon \frac {m+1}2\leq i<m, \, \frac {m+1}2<j\leq m,\, \frac {m+1}2<k<m\right\}$ & $m$ odd\\
& & $\begin{array}{ll}
\left\{41,\,51,\,1i,\,6j\colon\frac m2<i<m, \frac {m+3}2<j<m\right\}\cup\\
\cup\left\{4i,\,5j\colon \frac m2<i\leq m, \frac m2+2\leq j\leq m\right\}
\end{array}$ & Otherwise\\
& $4151$ & $\left\{1i,\,4i,\, 5i,\, 6i\colon\, 2\leq i\leq \frac {m+1}2\leq i<m\right\}$ & $m$ odd\\
& & $\left\{1i,\,4i,\, 5j,\, 6j\colon\,2\leq i\leq \frac m2,\, 2\leq j\leq \frac {m+3}2\right\}$ & Otherwise\\
& $5152$ & $\begin{array}{ll}
\left\{11,\,41,\, 5{\frac {m+3}2},\, 6{\frac {m+3}2}\right\}\end{array}$ & $m$ odd\\
& & $\left\{11,\,1({\frac {m+3}2}),\, 41,\, 4({\frac {m+3}2})\right\}$ & Otherwise\\
& $5161$ & $\left\{11,\,1m,\, 62,\, 6m\right\}$ & For all $m$\\
$13$ & $1141$ & $\left\{61,\,1i,\,4j,\, 5k,\, 6i\colon\, 2\leq j\leq \frac {m+1}2<i\leq m,\,2\leq k\leq {\frac {m+3}2}\right\}$ & $m$ odd\\
& & $\left\{61,\,1i,\,4j,\,5j,\, 6k\colon\, \frac m2<i\leq m,\, 2\leq j\leq\frac {m+3}2<j\leq m\right\}$ & Otherwise\\
& $1161$ & $\left\{41,\, 5{\frac {m+3}2}\right\}$ & $m$ odd\\
& & $\left\{41,\,4({\frac {m+3}2)}\right\}$ & Otherwise\\
& $4151$ & $\left\{61,\, 2{\frac {m+1}2}\right\}$ & $m$ odd\\
& & $\left\{61,\,6({\frac {m+3}2})\right\}$ & Otherwise\\
& $4161$ & $\left\{51,\, 1i,\,4j,\, 5j,\, 6k\colon 1\leq i\leq \frac {m+1}2<j\leq m,\, 2\leq k\leq \frac {m+1}2\right\}$ & $m$ odd\\
& & $\left\{51,\, 1i,\,4j,\,5k,\, 6l\colon\, 1\leq i\leq \frac m2<j\leq m,\, 2\leq l\leq \frac {m+3}2<k\leq m\right\}$ & Otherwise\\
& $5161$ & $\left\{1i,\,4j,\,5k,\,6l\colon  1\leq j\leq \frac {m+1}2\leq i\leq m,\, 2\leq k\leq \frac {m+1}2<l\leq m\right\}$ & $m$ odd\\
& & $\left\{1i,\,4j,\,5k,\, 6i\colon\, \frac {m+3}2\leq i\leq m,\,1\leq j\leq \frac m2,\, 2\leq k\leq \frac {m+3}2\right\}$ & Otherwise\\
$14$ & $4161$ & $V(\mathcal{G}^m(H_{14}))\setminus\left\{11,\,1m,\,41,\,61\right\}$ & For all $m$\\
$15$ & $1161$ &  $\left\{1i,\,4j,\, 5i,\,6i\colon 2\leq i\leq \frac {m+1}2,\, 2< j\leq \frac {m+1}2\right\}$ & $m$ odd\\
& & $\left\{1i,\,4j,\,5j,\,6k\colon\, 2\leq i\leq \frac m2,\,2<j\leq \frac {m+3}2,\, 2\leq k\leq \frac {m+3}2\right\}$ & Otherwise\\
& $5152$ & $\left\{11,\, 1{\frac {m+1}2},\, 1m,\, 41,\, 5{\frac {m+3}2},\, 61\right\}$ & $m$ odd\\
& & $\left\{11,\, 1m,\,41,\, 4({\frac {m+3}2}),\,61,\, 6({\frac {m+3}2})\right\}$ & Otherwise\\
& $5161$ & $\left\{1i,\,4j,\, 5j,\,6k\colon \frac {m+1}2\leq i<m,\, 2\leq j\leq \frac {m+1}2<k\leq m\right\}$ & $m$ odd\\
& & $\left\{1i,\,4j,\,5k,\,6l\colon\, 2\leq j\leq\frac m2<i<m,\,  2\leq k\leq \frac {m+3}2\leq l\leq m\right\}$ & Otherwise\\
& $6152$ & $\left\{51,\,1i,\,4j,\,5k,\,6i\colon 2\leq i\leq \frac {m+1}2<j\leq m,\frac{m+3}2<k\leq m\right\}$ & $m$ odd\\
& & $\begin{array}{ll}
\left\{51,\,1i,\,6j\colon 2\leq i\leq \frac m2, 2\leq j\leq\frac {m+3}2\right\}\\
\left\{4i,\,5j\colon \frac {m+3}2<i\leq m, \frac {m+3}2\leq j\leq m\right\}
\end{array}$ & Otherwise\\
$16$ & $1141$ & $V(\mathcal{G}^m(H_{14}))\setminus\left\{11,\,41,\,61,\,62\right\}$ & For all $m$\\
\end{tabular}
\vspace{0.1cm}
 \caption{Resolving neighbourhoods of $\mathcal{G}^m(H_n)$, for $12\leq n\leq 16$.}\label{Table6c}
\end{table}

\renewcommand{\tabcolsep}{1pt}
\begin{table}[ht]
 \centering
\begin{tabular}{llll}
     $abcd$   &  $\overline{\mathcal{R}}\{v_a^b,v_c^d\}$ & Constraint\\ \hline
$1161$ & $\left\{1i,\,4j,\,5i,\,6i\colon\, 2\leq i\leq \frac {m+1}2,\,1\leq i< \frac {m+1}2\right\}$ & $m$ odd\\
& $\left\{1i,\,4j,\,5k,\, 6l\colon\, 2\leq i\leq \frac m2,\,1\leq j\leq \frac m2,\, 2\leq k\leq \frac m2,\, 2\leq l\leq \frac {m+3}2\right\}$ & Otherwise\\
$1162$ & $\left\{61,\, 1i,\,4j,\,5i,\,6i\colon\, \frac {m+1}2<i\leq m,\, \frac {m+1}2<j<m\right\}$ & $m$ odd\\
& $\left\{61,\, 1i,\,4j,\,5k,\,6i\colon\, \frac {m+3}2<i\leq m,\,\frac {m+3}2\leq j<m,\, \frac {m+3}2\leq k\leq m\right\}$ & Otherwise\\
$4151$ & $\left\{1i,\,4j,\, 5j\,6k\colon\, 1\leq i\leq \frac {m+1}2,\, 3\leq j\leq  \frac {m+1}2,\, 2\leq k\leq \frac {m+3}2\right\}$ & $m$ odd\\
& $\begin{array}{ll}
\left\{1i,\,6j\colon\, 1\leq i\leq \frac {m+3}2,\, 2\leq j\leq \frac {m+3}2\right\}\cup\\
\cup\left\{4i,\,5j\colon 3\leq i\leq \frac m2,\, 3\leq j\leq \frac {m+3}2\right\}
\end{array}$ & Otherwise\\
$4152$ & $V(\mathcal{G}^m(H_{17}))\setminus\left\{1i,4j,4m,5j,6i,6(\frac {m+3}2) \colon\, 1\leq j\leq\frac {m+1}2\leq i<m\right\}$ & $m$ odd\\
& $V(\mathcal{G}^m(H_{17}))\setminus\left\{1i, 4j,4m, 5j, 6i,6({\frac {m+4}2})\colon\, 2<i\leq \frac {m+3}2,\, 1\leq j\leq \frac {m+3}2\right\}$ & Otherwise\\
$4162$ & $\left\{42,\,4m,\, 51,\, 52\right\}$ & For all $m$\\
$5161$ & $\left\{6m,\,1i,\,4j,\,5k,\,6i\colon\, 2\leq i\leq \frac {m+1}2,\,2\leq j<\frac {m+1}2,\,2\leq k\leq \frac {m+1}2\right\}$ & $m$ odd\\
& $\begin{array}{ll}
\left\{6m,\,1i,\,6j\colon\, 2\leq i\leq \frac m2,\, 2\leq j\leq \frac {m+3}2\right\}\\
\left\{4i,\,5i\colon\, 2\leq i\leq \frac m2\right\}
\end{array}$ & Otherwise\\
$5162$ & $\begin{array}{ll}
\left\{61,\,41,\,1i,\,6i\colon\, \frac {m+1}2<i\leq m,\, \frac {m+3}2<i\leq m\right\}\\
\cup\left\{4i,\,5j\colon\, \frac {m+1}2<i<m,\,\frac {m+1}2<j\leq m \right\}\end{array}$ & $m$ odd\\
& $\begin{array}{ll}
\left\{61,\,41,\,1i,\,6i\colon\, \frac {m+3}2<i\leq m\right\}\\
\cup\left\{4i,\,5j\colon \frac {m+3}2\leq  i<m,\, \frac {m+3}2< j\leq m\right\}
\end{array}$ & Otherwise\\
$6162$ & $\left\{11,\,4{\frac {m+1}2},\,51,\, 6{\frac {m+3}2}\right\}$ & $m$ odd\\
& $\begin{array}{ll}
\left\{11,\,1({\frac {m+3}2})\,51,\, 5({\frac {m+3}2})\right\}
\end{array}$ & Otherwise
\end{tabular}
\vspace{0.1cm}
 \caption{Resolving neighbourhoods of $\mathcal{G}^m(H_{17})$.}\label{Table6d}
\end{table}

\begin{table}[ht]
 \centering
\begin{tabular}{lc||lc}
  $G$ & Asymptotic behaviour & $G$ & Asymptotic behaviour\\ \hline
  $Q_1$ & Unbounded &   $H_1,\, H_2$ & $=1$\\
  $Q_2$ & Unbounded & $H_3$ & $\sim 2$\\
$P_1$ & Unknown & $H_4, H_{10}, H_{12}, H_{17}$ & $\sim 2$\\
$P_2$ & $\sim 2$ & $H_5, H_{11}, H_{13}$ & $\sim 2$\\
$P_3$ & $\sim 2$ & $H_6, H_9, H_{14}, H_{16}$  & Unknown\\
$P_4,\, P_5,\, P_6$ & $\sim 2$ & $H_7$  & $\sim 2$\\
& & $H_8$  & Unknown\\
& & $H_{15}$ & $\sim 2$
\end{tabular}
 \caption{Asymptotic behaviour of $\mathrm{ldim}_{\mathrm{f}}(\mathcal{G}^m(G))$.}\label{t1}
\end{table}

\end{document}